\newcommand{\Rmnum}[1]{\expandafter\@slowromancap\romannumeral #1@}
\newcommand\blfootnote[1]{%
\begingroup
\renewcommand\thefootnote{}\footnote{#1}%
\addtocounter{footnote}{-1}%
\endgroup
}
\numberwithin{equation}{section}
\newtheorem{corollary}{Corollary}[section]
\newtheorem{lemma}{Lemma}[section]
\newtheorem{proposition}{Proposition}[section]
\newtheorem{remark}{Remark}[section]
\newtheorem{theorem}{Theorem}[section]
\newcommand\theref[1]{Theorem~\ref{#1}}
\newcommand\lemref[1]{Lemma~\ref{#1}}
\newcommand\proref[1]{Proposition~\ref{#1}}
\newcommand\secref[1]{Section~\ref{#1}}
\title{Subsonic time-periodic solution to the compressible Euler equations triggered by boundary conditions}
\author{Xiaomin Zhang\quad Jiawei Sun \quad Huimin Yu\thanks{Corresponding author. }
 \\ \small\textit{ Department of mathematics, Shandong Normal University, Jinan 250014 China}}
\begin{document}
\begin{sloppypar}
\date{}
\maketitle
\begin{center}
\begin{minipage}{130mm}{\small
\textbf{Abstract}:
In this paper, we consider the one-dimensional isentropic compressible Euler equations with source term $\beta(t,x)\rho|u|^{\alpha}u$ in a bounded domain, which can be used to describe gas transmission in a nozzle.~The model is imposed a subsonic time-periodic boundary condition.~Our main results reveal that the time-periodic boundary can trigger an unique subsonic time-periodic smooth solution and this unique periodic solution is stable under small perturbations on initial and boundary data.~To get the existence of subsonic time-periodic solution, we use the linear iterative skill and transfer the boundary value problem into two initial value ones by using the hyperbolic property of the system. Then the corresponding linearized system can be decoupled.~The uniqueness is a direct by-product of the stability. There is no small assumptions on coefficient $\beta(t,x)$. }
\end{minipage}
\end{center}
\blfootnote{\textbf{Keywords}: Isentropic compressible Euler equations, time-periodic boundary, source term, global existence, stability, subsonic flow, time-periodic solutions}
\blfootnote{\textbf{Mathematics Subject Classification 2010}:  35B10, 35A01, 35Q31.}
\section{Introduction}
\indent\indent We utilize pipes to transfer gas and control its flow in industrial application. Usually, the pipe wall is not smooth sufficiently and there is some resistance, which is regarded as a certain kind of frictional force.~In this paper, the isentropic compressible Euler equations with a friction term is investigated:
\begin{equation}\label{a1}
\left\{\begin{aligned}
&\partial_{t}\rho+\partial_{x}(\rho u)=0,\\
&\partial_{t}(\rho u)+\partial_{x}(\rho u^{2}+p)=\beta(t,x)\rho|u|^{\alpha}u,
\end{aligned}\right.
\quad(t,x)\in\mathbb{R}_{+}\times[0,L],
\end{equation}
where $\rho,u,p$ denote density of mass, velocity of gas and the pressure respectively and $L$ is a positive constant to represent the length of the nozzle. Here we consider the isentropic polytropic gas, i.e.
$$
p=A\rho^{\gamma},
$$
where the adiabatic gas exponent $\gamma>1$ and without loss of generality we assume $A=1$. Moreover, we use $c$ to denote the sonic speed
$$
c=\sqrt{\frac{\partial p}{\partial\rho}}=\sqrt{\gamma}\rho^{\frac{\gamma-1}{2}}.
$$
Suppose the friction coefficient $\beta(t,x)$ is a $C^{1}$ smooth function satisfying
\begin{align}
\beta(t+P,x)=\beta(t,x),\label{F}\\
\|\beta(t,x)\|_{C^{1}(D)}\leq C_{0},\label{F1}
\end{align}
for some constants $P>0$ and $C_{0}>0$. Here the domain $D=\{(t,x)|t\in \mathbb{R}_{+},x\in [0,L]\}$. Obviously, $\beta(t,x)\equiv const.$ satisfies the above conditions~\eqref{F}-\eqref{F1}. Throughout this paper we assume the constant $\alpha>0$. \\
\indent We investigate the global existence and stability of a kind of subsonic time-periodic solution to equations~\eqref{a1}. In recent years,  much effort has been made on the time-periodic solutions to the viscous fluids equations and the hyperbolic conservation laws, see for example~\cite{Cai,Jin,Luo,Ma,Matsumura,G,Ohnawa,Takeno,Temple,Naoki}. However, the time-periodic solutions mentioned above are usually caused by the time-periodic external forces. As far as we know, there is little work to consider the problem with time-periodical boundary. In 2019, Yuan~\cite{Yuan} studied the existence and high-frequency limiting behavior of supersonic time-periodic solutions to the 1-D isentropic compressible Euler equations (i.e. $\beta(t,x)\equiv0$) with time-periodic boundary conditions. \cite{YuH, Yuh} considered the existence and stability of supersonic time periodic flows for the compressible Euler equations with friction term.
It is well known that subsonic boundary condition is more complicated for compressible Euler equation with fixed boundary. Motivated by~\cite{Qu}, in which Qu considered the time-periodic solutions triggered by a kind of dissipative time-periodic boundary condition for the general quasilinear hyperbolic systems, we consider the subsonic time-periodical solutions of isentropic Euler equation with nonlinear source term. It should be noted that: the first author Zhang~\cite{Zhang} studied a similar problem for Euler equation with linear damping and~$\beta(t,x)=\beta(t)$. However, \cite{Zhang} needs the smallness of the friction coefficient $\beta(t)$ and its integral is zero in one period, i.e. $$\displaystyle\int_t^{t+P} \beta(s)ds=0,$$
 which can not be satisfied even for a constant. We remove these two restrictions and include the constant friction coefficient case in this paper.\\
 \indent The rest of this paper is organized as follows. In~\secref{s2}, we first introduce the Riemann invariants of the homogeneous compressible Euler equations and give the main results:~\theref{t1} and~\theref{t2}. In~\secref{s3}, we use the linearized iteration method to prove~\theref{t1}. In~\secref{s4}, using a method similar to that in~\cite{Li}, we first prove the global existence of classical solutions to the initial-boundary problem, then use the inductive method to prove~\theref{t2}.
\section{Prelimilaries and main results}\label{s2}
\indent\indent The two eigenvalues calculated from the system \eqref{a1} are
$$
\lambda_{1}=u-c,\quad\lambda_{2}=u+c.
$$
There holds
\begin{align*}
\lambda_{1}(\underline{\rho},0)<0<\lambda_{2}(\underline{\rho},0)
\end{align*}
and
\begin{align}
\lambda_{1}(\rho,u)<0<\lambda_{2}(\rho,u),\quad (\rho,u)\in\Omega\label{U}
\end{align}
for any positive constant $\underline{\rho}>0$ and a small neighborhood $\Omega$ of $(\underline{\rho},0)$.\\
\indent With the aid of the Riemann invariants $m$ and $n$ defined by
\begin{equation}\label{R}
m=\frac{1}{2}(u-\frac{2}{\gamma-1}c),\quad
n=\frac{1}{2}(u+\frac{2}{\gamma-1}c),
\end{equation}
the equations~\eqref{a1} are changed into the following form
\begin{align}\label{a3}
\left\{
\begin{aligned}
m_{t}+\lambda_{1}(m,n)m_{x}=\frac{\beta(t,x)|m+n|^{\alpha}(m+n)}{2},\\
n_{t}+\lambda_{2}(m,n)n_{x}=\frac{\beta(t,x)|m+n|^{\alpha}(m+n)}{2},
\end{aligned}\right.
\end{align}
where
$$\lambda_{1}(m,n)=\frac{\gamma+1}{2}m+\frac{3-\gamma}{2}n,\quad\lambda_{2}(m,n)=\frac{3-\gamma}{2}m+\frac{\gamma+1}{2}n.$$
\indent Suppose that the solution $(m,n)$ to the system~\eqref{a3} satisfies the following initial data and boundary conditions
\begin{align}
t=0:\quad m(0,x)&=m_{0}(x),~~n(0,x)=n_{0}(x),\label{R1}\\
x=0:\quad n(t,0)&=n_{b}(t),\label{R11}\\
x=L:\quad m(t,L)&=m_{b}(t),\label{R12}
\end{align}
where $m_{b}(t),n_{b}(t)$ are two periodic functions with the period $P>0$.\\
\indent Let
\begin{align*}
\phi(t,x)=(\phi_{1}(t,x),\phi_{2}(t,x))^{\top}&=(m(t,x)-\underline{m},n(t,x)-\underline{n})^{\top},\\
\underline{\phi}&=(\underline{m},\underline{n})^{\top},
\end{align*}
where $\underline{m}=-\frac{1}{\gamma-1}\underline{c}=-\frac{\sqrt{\gamma}}{\gamma-1}\underline{\rho}^{\frac{\gamma-1}{2}},~~\underline{n}=\frac{1}{\gamma-1}\underline{c}
=\frac{\sqrt{\gamma}}{\gamma-1}\underline{\rho}^{\frac{\gamma-1}{2}}$. Then equations~\eqref{a3} can be written as
\begin{align}\label{a4}
\left\{
\begin{aligned}
\partial_{t}\phi_{1}+\lambda_{1}(\phi+\underline{\phi})\partial_{x}\phi_{1}=\frac{\beta(t,x)}{2}|\phi_{1}+\phi_{2}|^{\alpha}(\phi_{1}+\phi_{2}),\\
\partial_{t}\phi_{2}+\lambda_{2}(\phi+\underline{\phi})\partial_{x}\phi_{2}=\frac{\beta(t,x)}{2}|\phi_{1}+\phi_{2}|^{\alpha}(\phi_{1}+\phi_{2})
\end{aligned}\right.
\end{align}
with the corresponding initial data and boundary conditions
\begin{align}
t=0:~~\phi(0,x)&=\phi_{0}(x)=(\phi_{1_{0}}(x),\phi_{2_{0}}(x))^{\top}\notag\\
&\hspace{1.3cm}=(m_{0}(x)-\underline{m},n_{0}(x)-\underline{n})^{\top},\label{a5}\\
x=0:~~\phi_{2}(t,0)&=\phi_{2_{b}}(t)=n_{b}(t)-\underline{n},\quad t\geq0,\label{a6}\\
x=L:~~\phi_{1}(t,L)&=\phi_{1_{b}}(t)=m_{b}(t)-\underline{m},\quad t\geq0.\label{a7}
\end{align}
It is easy to see that $\phi_{i_{b}}(t)(i=1,2)$ are also periodic functions with period $P>0$, i.e. $\phi_{i_{b}}(t+P)=\phi_{i_{b}}(t)$.
We further suppose the following compatibility conditions:
\begin{align}\label{E}
\left\{
\begin{aligned}
&\phi_{1_{b}}(0)=\phi_{1_{0}}(L),~~\phi_{2_{b}}(0)=\phi_{2_{0}}(0),\\
&\phi'_{1_{b}}(0)+\lambda_{1}(\phi_{0}(L)+\underline{\phi})\phi'_{1_{0}}(L)\\
&\hspace{2.5cm}=\frac{\beta(0,L)}{2}\Big(|\phi_{1_{b}}(0)+\phi_{2_{0}}(L)|^{\alpha}
(\phi_{1_{b}}(0)+\phi_{2_{0}}(L))\Big),\\
&\phi'_{2_{b}}(0)+\lambda_{2}(\phi_{0}(0)+\underline{\phi})\phi'_{2_{0}}(0)\\
&\hspace{2.5cm}=\frac{\beta(0,0)}{2}\Big(|\phi_{1_{0}}(0)+\phi_{2_{b}}(0)|^{\alpha}
(\phi_{1_{0}}(0)+\phi_{2_{b}}(0))\Big).
\end{aligned}\right.
\end{align}
\indent By~\eqref{U}, we get
\begin{align}
\lambda_{1}(\phi+\underline{\phi})<0<\lambda_{2}(\phi+\underline{\phi}),\quad \forall \phi\in \Phi,\label{a8}
\end{align}
where $\Phi$ is a small neighborhood of $O=(0,0)^{\top}$ corresponding to $\Omega$.

Define
$$
\nu_{i}(\phi+\underline{\phi})=\lambda_{i}^{-1}(\phi+\underline{\phi}),\quad i=1,2
$$
and denote
$$
\nu_{max}=\mathop{\max}\limits_{i=1,2}\mathop{\sup}\limits_{\phi\in \Phi}|\nu_{i}(\phi+\underline{\phi})|.
$$
By scaling if necessary, we can assume
\begin{align}
\nu_{max}\leq1.\label{D}
\end{align}
Unless specified, in this paper $C_{i}(i=1,2,3,\ldots)$ denotes a generic constant.\\
\indent Next, we give the main results in the following theorems.
\begin{theorem}\label{t1}
(Existence of time-periodic solutions) There exists a small enough constant $\varepsilon_{1}>0$ and a $C^{1}$ smooth function $\phi_{0}=\phi_{0}(x)$, if the $C^{1}$ smooth functions $\phi_{0}(x)$ and $\phi_{i_{b}}(t)(i=1,2)$ satisfy
\begin{align}
\|\phi_{0}\|_{C^{1}([0,L])}\leq C_{1}\varepsilon,\label{y1}\\
\phi_{i_{b}}(t+P)=\phi_{i_{b}}(t),\label{a9}\\
\|\phi_{i_{b}}(t)\|_{C^{1}(\mathbb{R_{+}})}\leq\varepsilon,\label{a10}
\end{align}
for any $\varepsilon\in(0,\varepsilon_{1})$, then the initial-boundary value problem~\eqref{a4}-\eqref{a7} admits a $C^{1}$ time-periodic solution $\phi=\phi^{(P)}(t,x)$ on $D=\{(t,x)|t\in \mathbb{R}_{+},x\in [0,L]\}$ which satisfies
\begin{align}
\phi^{(P)}(t+P,x)&=\phi^{(P)}(t,x),\quad \forall(t,x)\in D,\label{a12}\\
\|\phi^{(P)}\|_{C^{1}(D)}&\leq C_{1}\varepsilon.\label{a13}
\end{align}
\end{theorem}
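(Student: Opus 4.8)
\emph{Proof proposal for \theref{t1}.} My plan is to construct the periodic solution directly on the strip $D$ by a linearized iteration in the space of $P$-periodic $C^{1}$ maps, using the strict hyperbolicity \eqref{a8} to split the two-point data \eqref{a6}--\eqref{a7} into a pair of decoupled transport problems that can each be integrated along characteristics. Given an iterate $\phi^{(k)}=(\phi_{1}^{(k)},\phi_{2}^{(k)})$, I freeze the coefficients and the source to obtain the linear, decoupled system
\begin{align*}
\partial_{t}\phi_{1}^{(k+1)}+\lambda_{1}(\phi^{(k)}+\underline{\phi})\,\partial_{x}\phi_{1}^{(k+1)}&=S^{(k)},\\
\partial_{t}\phi_{2}^{(k+1)}+\lambda_{2}(\phi^{(k)}+\underline{\phi})\,\partial_{x}\phi_{2}^{(k+1)}&=S^{(k)},
\end{align*}
where $S^{(k)}=\tfrac{\beta(t,x)}{2}|\phi_{1}^{(k)}+\phi_{2}^{(k)}|^{\alpha}(\phi_{1}^{(k)}+\phi_{2}^{(k)})$. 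Because $\lambda_{1}<0<\lambda_{2}$ on $\Phi$, the first equation is fed by the data on $x=L$ and the second by the data on $x=0$; writing the characteristics with $x$ as evolution variable, $\tfrac{dt}{dx}=\nu_{i}(\phi^{(k)}+\underline{\phi})$, each line turns into an initial value problem posed from one boundary. Integrating along the characteristic through $(t,x)$ back to the feeding boundary gives the representation
$$
\phi_{1}^{(k+1)}(t,x)=\phi_{1_{b}}(\tau_{1})-\int_{x}^{L}\nu_{1}\,S^{(k)}\,dy,\qquad
\phi_{2}^{(k+1)}(t,x)=\phi_{2_{b}}(\tau_{2})+\int_{0}^{x}\nu_{2}\,S^{(k)}\,dy,
$$
the integrands being evaluated along the corresponding characteristic and $\tau_{i}=\tau_{i}(t,x)$ being the boundary-arrival time. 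Since $\beta$, $\phi_{i_{b}}$ and, inductively, $\phi^{(k)}$ are $P$-periodic in $t$, the characteristic map satisfies $\tau_{i}(t+P,x)=\tau_{i}(t,x)+P$, so each $\phi_{i}^{(k+1)}$ is again $P$-periodic; periodicity is thus preserved automatically, and no separate fixed-point argument in $t$ is needed.

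Next I would close a uniform $C^{1}$ bound on the periodic ball $\{\|\phi\|_{C^{1}(D)}\le C_{1}\varepsilon\}$, starting from $\phi^{(0)}\equiv 0$. The $C^{0}$ part is immediate from the representation and \eqref{D}: since $|S^{(k)}|\le\tfrac{C_{0}}{2}(2C_{1}\varepsilon)^{\alpha+1}$ and $|\nu_{i}|\le 1$, one gets $|\phi_{i}^{(k+1)}|\le\varepsilon+L\,\tfrac{C_{0}}{2}(2C_{1}\varepsilon)^{\alpha+1}$, and because $\alpha>0$ the source contribution is $O(\varepsilon^{1+\alpha})=o(\varepsilon)$, hence absorbed by taking $C_{1}>1$ and $\varepsilon_{1}$ small. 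Differentiating the representation (equivalently, deriving and integrating the transport equations for $\partial_{t}\phi_{i}^{(k+1)}$ and $\partial_{x}\phi_{i}^{(k+1)}$ along the same characteristics) yields the $C^{1}$ estimate: every term carrying a derivative of the source retains a factor $|\phi_{1}^{(k)}+\phi_{2}^{(k)}|^{\alpha}=O(\varepsilon^{\alpha})$, while the sensitivity of the characteristics to $\phi^{(k)}$ enters only through $\phi_{i_{b}}'$, which is $O(\varepsilon)$ by \eqref{a10}. Choosing $C_{1}$ and then $\varepsilon_{1}$ appropriately gives $\|\phi^{(k+1)}\|_{C^{1}(D)}\le C_{1}\varepsilon$, so the ball is invariant.

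For convergence I would estimate consecutive differences. Subtracting the representation formulas and using that $s\mapsto|s|^{\alpha}s$ is Lipschitz with constant $O((C_{1}\varepsilon)^{\alpha})$ on the relevant range, together with the Lipschitz dependence of $\nu_{i}$ and $\tau_{i}$ on the frozen state, I expect
$$
\|\phi^{(k+1)}-\phi^{(k)}\|_{C^{0}(D)}\le C\big(\varepsilon^{\alpha}+\varepsilon\big)\,\|\phi^{(k)}-\phi^{(k-1)}\|_{C^{0}(D)},
$$
the $\varepsilon^{\alpha}$ coming from the source nonlinearity and the $\varepsilon$ from the $\phi_{i_{b}}'$ factor that multiplies the characteristic perturbation. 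For $\varepsilon_{1}$ small this contraction factor is $<1$, so $\{\phi^{(k)}\}$ is Cauchy in $C^{0}(D)$ with limit $\phi^{(P)}$; the uniform $C^{1}$ bound then upgrades the convergence (Arzel\`a--Ascoli on one period, extended periodically) so that $\phi^{(P)}\in C^{1}(D)$, satisfies \eqref{a13}, solves \eqref{a4} with \eqref{a6}--\eqref{a7}, and obeys \eqref{a12}. Finally I set $\phi_{0}(x):=\phi^{(P)}(0,x)$; then $\|\phi_{0}\|_{C^{1}([0,L])}\le C_{1}\varepsilon$ gives \eqref{y1}, and the compatibility conditions \eqref{E} hold automatically because $\phi_{0}$ is the trace of a global $C^{1}$ solution.

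The hard part will be the simultaneous closure of the uniform $C^{1}$ bound and the contraction. The characteristic speeds depend on the unknown, so $\lambda_{i}(\phi^{(k)}+\underline{\phi})-\lambda_{i}(\phi^{(k-1)}+\underline{\phi})$ is $O(1)$-Lipschitz in $\phi^{(k)}-\phi^{(k-1)}$ and is \emph{not} itself small; moreover $|w|^{\alpha}w$ is only $C^{1}$ at $w=0$, so second-order control of the source is unavailable. The estimates close only because every such non-small term is paired either with the small source $S^{(k)}=O(\varepsilon^{1+\alpha})$ or with the small boundary derivative $\phi_{i_{b}}'=O(\varepsilon)$, and because $\alpha>0$ forces $\varepsilon^{\alpha}\to 0$; tracking these pairings carefully, rather than the coefficient differences in isolation, is where the real work lies, and it is precisely this mechanism that lets one avoid any smallness assumption on $\beta(t,x)$ itself.
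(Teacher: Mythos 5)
Your architecture coincides with the paper's: the same linearized iteration from $\phi^{(0)}\equiv 0$, the same use of $\lambda_{1}<0<\lambda_{2}$ to swap the roles of $t$ and $x$ and feed $\phi_{1}^{(l)}$ from $x=L$ and $\phi_{2}^{(l)}$ from $x=0$, the same $C^{0}$/$C^{1}$ invariant-ball estimates, and the same contraction with factor $O(\varepsilon^{\min\{1,\alpha\}})$ coming from the Lipschitz constant of $s\mapsto|s|^{\alpha}s$ near the origin (this is exactly \eqref{b6}--\eqref{b7} of \proref{p1}). Your periodicity argument via $\tau_{i}(t+P,x)=\tau_{i}(t,x)+P$ is a harmless variant of the paper's uniqueness argument for the linear problem.

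There is, however, one genuine gap, at the step where you pass to the limit: you claim that ``the uniform $C^{1}$ bound then upgrades the convergence (Arzel\`a--Ascoli \ldots) so that $\phi^{(P)}\in C^{1}(D)$.'' A uniform $C^{1}$ bound gives equicontinuity of $\phi^{(l)}$ but only uniform \emph{boundedness} of $\partial_{t}\phi^{(l)}$ and $\partial_{x}\phi^{(l)}$; Arzel\`a--Ascoli applied to the derivative sequence requires equicontinuity of the derivatives themselves, which does not follow from \eqref{b6}. Without it you can only conclude that the $C^{0}$ limit is Lipschitz, not that it is $C^{1}$ and solves \eqref{a4} classically (the $C^{0}$ contraction cannot be upgraded to a $C^{1}$ contraction because of the usual loss of derivative in quasilinear iterations). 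This is precisely why the paper proves the uniform modulus-of-continuity estimates \eqref{b8}, i.e.\ \eqref{b12}--\eqref{b13}, bounding $\varpi(\delta|\partial_{t}\phi_{i}^{(l)})+\varpi(\delta|\partial_{x}\phi_{i}^{(l)})$ by a function $H_{P}(\delta)\to 0$ independent of $l$; establishing this occupies roughly the second half of the proof of \proref{p1} (the comparison of characteristics \eqref{b37}, the estimates \eqref{b38}--\eqref{b43}). Your proposal needs this extra layer of estimates --- on the derivatives of the iterates, not just on the iterates --- before the Arzel\`a--Ascoli step can be invoked.
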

\begin{theorem}\label{t2}
(Stability of time-periodic solutions) There exists a small constant $\varepsilon_{2}\in(0,\varepsilon_{1})$, such that for any given $\varepsilon\in(0,\varepsilon_{2})$ and any given $C^{1}$ smooth functions $\phi_{0}=\phi_{0}(x)$ and $\phi_{i_{b}}(t)(i=1,2)$ satisfying \eqref{y1} and \eqref{a9}-\eqref{a10} with compatibility conditions \eqref{E}, the initial-boundary value problem \eqref{a4}-\eqref{a7} have a unique global $C^{1}$ classical solution $\phi=\phi(t,x)$ on $D=\{(t,x)|t\in\mathbb{R}_{+},x\in[0,L]\}$ satisfying
\begin{align}
\|\phi(t,\cdot)-\phi^{(P)}(t,\cdot)\|_{C^{0}}\leq C_{2}\varepsilon\xi^{[t/T_{0}]},\quad\forall t\geq0,\label{a15}
\end{align}
where $\phi^{(P)}$, depending on $\phi_{i_{b}}(t)(i=1,2)$, is the time-periodic solution given through \theref{t1}, $\xi\in(0,1)$ is a constant and  $T_{0}=L\nu_{max}$.
\end{theorem}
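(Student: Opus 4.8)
The plan is to split the argument into a global existence/uniqueness part and a decay part, following the characteristic framework for quasilinear hyperbolic initial--boundary value problems. First I would record that the strict hyperbolicity \eqref{a8}, $\lambda_1(\phi+\underline\phi)<0<\lambda_2(\phi+\underline\phi)$ on $\Phi$, makes the prescription of $\phi_2$ at $x=0$ and of $\phi_1$ at $x=L$ exactly the admissible (incoming) boundary data, so that local existence and uniqueness of a $C^1$ solution follows from the standard characteristic/fixed-point construction once the compatibility conditions \eqref{E} are imposed. To upgrade this to a global $C^1$ solution I would establish a uniform-in-time a priori bound $\|\phi(t,\cdot)\|_{C^1}\le C_1\varepsilon$. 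The key structural fact I would exploit is that $\underline m+\underline n=0$, so that $m+n=\phi_1+\phi_2$ and the source $\tfrac{\beta}{2}|\phi_1+\phi_2|^\alpha(\phi_1+\phi_2)$ is of order $\varepsilon^{\alpha+1}$, i.e.\ genuinely higher order; integrating the system \eqref{a4} along its characteristics over one crossing time $T_0=L\nu_{max}$ then shows that on each time strip the solution is controlled by the boundary data \eqref{a6}--\eqref{a7} (of size $\varepsilon$) plus a higher-order source contribution, which keeps $\|\phi\|_{C^1}\le C_1\varepsilon$ and, by continuation, yields the global $C^1$ solution.

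For the decay, set $w=\phi-\phi^{(P)}$. Since the periodic solution $\phi^{(P)}$ of \theref{t1} carries the same boundary data, $w$ satisfies the homogeneous boundary conditions $w_2(t,0)=0$ and $w_1(t,L)=0$. Subtracting the two copies of \eqref{a4} I would derive, for each $i$,
\begin{align*}
\partial_t w_i+\lambda_i(\phi+\underline\phi)\partial_x w_i
=\big[s_i(\phi)-s_i(\phi^{(P)})\big]-\big[\lambda_i(\phi+\underline\phi)-\lambda_i(\phi^{(P)}+\underline\phi)\big]\partial_x\phi_i^{(P)},
\end{align*}
where $s_i(\phi)=\tfrac{\beta}{2}|\phi_1+\phi_2|^\alpha(\phi_1+\phi_2)$. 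By the mean value theorem applied to $z\mapsto|z|^\alpha z$ the source difference is $O(\varepsilon^\alpha\|w\|_{C^0})$; because $\lambda_i$ is linear in $(m,n)$ and $\partial_x\phi^{(P)}=O(\varepsilon)$ by \eqref{a13}, the characteristic-mismatch term is $O(\varepsilon\|w\|_{C^0})$. Hence the right-hand side is bounded by $C\varepsilon^{\min\{\alpha,1\}}\|w(t,\cdot)\|_{C^0}$, a small coefficient times $w$ with no derivatives of $w$ --- which is why only a $C^0$ estimate is needed for \eqref{a15}.

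The contraction then comes from characteristic tracing. For $t\ge T_0$ and any $x$, the normalization $\nu_{max}\le1$ together with $T_0=L\nu_{max}$ guarantees that the backward $1$-characteristic from $(t,x)$ meets $x=L$ and the backward $2$-characteristic meets $x=0$ within time $T_0$, i.e.\ at some $t'\in[t-T_0,t]$. Integrating the transport equation above from that boundary point, where $w_i=0$, leaves only the source integral, so
\begin{align*}
|w_i(t,x)|\le C\varepsilon^{\min\{\alpha,1\}}\int_{t-T_0}^{t}\|w(s,\cdot)\|_{C^0}\,ds\le \xi\sup_{s\in[t-T_0,t]}\|w(s,\cdot)\|_{C^0},
\end{align*}
with $\xi:=C\varepsilon^{\min\{\alpha,1\}}T_0\in(0,1)$ for $\varepsilon$ small. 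Writing $M_k:=\sup_{t\in[kT_0,(k+1)T_0]}\|w(t,\cdot)\|_{C^0}$, the interval $[t-T_0,t]$ lies in $[(k-1)T_0,(k+1)T_0]$ for $t\in[kT_0,(k+1)T_0]$, giving $M_k\le\xi\max\{M_{k-1},M_k\}$ and hence $M_k\le\xi M_{k-1}$; with $M_0\le C_2\varepsilon$ coming from $\|w(0,\cdot)\|_{C^0}\le\|\phi_0-\phi^{(P)}(0,\cdot)\|_{C^0}$ and the higher-order source on the first strip, induction yields $\|w(t,\cdot)\|_{C^0}\le C_2\varepsilon\,\xi^{[t/T_0]}$, which is \eqref{a15}. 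Uniqueness of the global solution --- and, as a by-product, of $\phi^{(P)}$ --- follows because any two solutions with the same data have $w(0,\cdot)\equiv0$, forcing $w\equiv0$.

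I expect the main obstacle to be the uniform-in-time a priori $C^1$ bound underlying global existence: unlike the $C^0$ decay, the derivative estimates must survive the quasilinear coupling (the $\lambda_i(\phi+\underline\phi)$ depend on $\phi$) over arbitrarily long times, and the source $\beta|\phi_1+\phi_2|^\alpha(\phi_1+\phi_2)$ need not be sign-definite, so decay cannot simply be read off from a friction sign. The argument must instead lean on the finite crossing time $T_0$ together with the higher-order ($\varepsilon^{\alpha+1}$) smallness of the source to prevent derivative growth from accumulating across strips; care is also needed in the characteristic tracing near the initial strip $0\le t\le T_0$ and near the corners, where a backward characteristic may reach $t=0$ rather than a boundary, so that the first-strip bound $M_0$ must be closed directly from the initial and boundary data.
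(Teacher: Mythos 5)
Your proposal matches the paper's proof in both structure and substance: global existence via a uniform a-priori $C^{1}$ bound obtained by integrating along characteristics over time strips of length comparable to the crossing time $T_{0}=L\nu_{max}$ (the paper's Lemma \ref{l1}), followed by an inductive strip-by-strip contraction for $w=\phi-\phi^{(P)}$, which satisfies homogeneous boundary data and a transport equation whose right-hand side is $O(\varepsilon^{\min\{\alpha,1\}})\|w\|_{C^{0}}$, yielding the factor $\xi\in(0,1)$ per strip exactly as in the paper's derivation of \eqref{c10} from \eqref{c11}. The approach and the key estimates are essentially identical, so no further comparison is needed.
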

\indent The uniqueness of the time-periodic solution is a direct consequence from~\theref{t2}.
\begin{corollary}\label{t3}
(Uniqueness of the time-periodic solution) There exists a constant $\varepsilon_{3}\in(0,\varepsilon_{2})$, such that for any given $\varepsilon\in(0,\varepsilon_{3})$ and any given $C^{1}$ smooth functions $\phi_{i_{b}}(t)(i=1,2)$ satisfying~\eqref{a9}-\eqref{a10}, the corresponding time-periodic solution $\phi=\phi^{(P)}(t,x)$ obtained in Theorem 2.1 is unique.
\end{corollary}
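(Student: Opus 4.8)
The plan is to deduce uniqueness directly from the exponential stability estimate \eqref{a15} of \theref{t2}, exploiting the elementary fact that a time-periodic function which decays to zero must vanish identically.

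First I would fix a pair of boundary data $\phi_{i_{b}}(t)\,(i=1,2)$ satisfying \eqref{a9}--\eqref{a10}, let $\phi^{(P)}$ be the time-periodic solution produced by \theref{t1}, and suppose $\tilde{\phi}^{(P)}$ is any second time-periodic solution of the same problem obeying the bound \eqref{a13}. The key observation is that $\tilde{\phi}^{(P)}$ is itself a global $C^{1}$ classical solution of the initial-boundary value problem \eqref{a4}--\eqref{a7}, namely the one whose initial datum is its own trace $\tilde{\phi}_{0}(x):=\tilde{\phi}^{(P)}(0,x)$. Because $\tilde{\phi}^{(P)}$ is a genuine solution, this trace automatically satisfies the compatibility conditions \eqref{E}, and by \eqref{a13} it obeys $\|\tilde{\phi}_{0}\|_{C^{1}([0,L])}\le C_{1}\varepsilon$; thus all hypotheses of \theref{t2} are met, provided $\varepsilon_{3}$ is chosen small enough that $\varepsilon\in(0,\varepsilon_{3})\subset(0,\varepsilon_{2})$.

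Next I would apply the stability estimate \eqref{a15} to the global solution $\tilde{\phi}^{(P)}$, with $\phi^{(P)}$ playing the role of the reference time-periodic solution supplied by \theref{t1}. This yields
$$
\|\tilde{\phi}^{(P)}(t,\cdot)-\phi^{(P)}(t,\cdot)\|_{C^{0}}\le C_{2}\varepsilon\,\xi^{[t/T_{0}]},\quad\forall t\ge0,
$$
and since $\xi\in(0,1)$ the right-hand side tends to $0$ as $t\to+\infty$. On the other hand, both $\tilde{\phi}^{(P)}$ and $\phi^{(P)}$ are $P$-periodic in $t$ by \eqref{a12}, so their difference $w(t,\cdot):=\tilde{\phi}^{(P)}(t,\cdot)-\phi^{(P)}(t,\cdot)$ satisfies $w(t+P,\cdot)=w(t,\cdot)$.

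Finally, combining these two facts forces $w\equiv0$. For any fixed $t_{0}\in[0,P]$ the quantity $\|w(t_{0}+kP,\cdot)\|_{C^{0}}=\|w(t_{0},\cdot)\|_{C^{0}}$ is independent of $k\in\mathbb{N}$, yet it is bounded above by $C_{2}\varepsilon\,\xi^{[(t_{0}+kP)/T_{0}]}\to0$ as $k\to\infty$; hence $\|w(t_{0},\cdot)\|_{C^{0}}=0$ for every $t_{0}$, i.e. $\tilde{\phi}^{(P)}\equiv\phi^{(P)}$ on $D$. This proves that the time-periodic solution is unique. I expect the only point demanding care to be the first step: verifying that an arbitrary time-periodic competitor genuinely fits the framework of \theref{t2}, in that its trace at $t=0$ inherits both the smallness bound \eqref{y1} and the compatibility conditions \eqref{E}. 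Once that is secured, the conclusion is essentially the observation that periodicity together with exponential decay implies triviality.
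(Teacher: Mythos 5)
Your argument is correct and is exactly the route the paper intends: it states only that the corollary is ``a direct consequence'' of \theref{t2}, and your proof supplies the standard details --- a second periodic solution is itself a global classical solution whose trace at $t=0$ inherits \eqref{y1} and \eqref{E}, so the stability estimate \eqref{a15} forces its difference from $\phi^{(P)}$ to decay, while periodicity forces that difference to be constant along $t_{0}+kP$, hence zero. Nothing is missing.
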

\begin{remark}\label{R2}
Theorem \ref{t1} and \ref{t2} are still valid when we change the boundary conditions (\ref{R1})-(\ref{R12}) to 
\begin{align}
t=0:\quad m(0,x)&=m_{0}(x),~~n(0,x)=n_{0}(x),\\
x=0:\quad n(t,0)&=n_{b}(t)+K_1(m(t,0)-\underline{m}),\\
x=L:\quad m(t,L)&=m_{b}(t)+K_2(n(t,L)-\underline{n}),
\end{align}
for $|K_1|<1$ and $|K_2|<1$, where the dissipative structures for the boundary conditions can be kept. The proofs in this manuscript can be extend to this case without any difficulty,
we omit the details here.
\end{remark}
\section{Existence of Time-periodic Solutions}\label{s3}
\indent\indent In this section, we give the proof of~\theref{t1} by applying the linearized iteration method.\\
\indent From~\eqref{a4} and~\eqref{a6}-\eqref{a7}, we consider the following linearized system
\begin{align}
&\partial_{t}\phi_{i}^{(l)}+\lambda_{i}(\phi^{(l-1)}+\underline{\phi})\partial_{x}\phi_{i}^{(l)}=\frac{\beta(t,x)}{2}|\phi_{1}^{(l-1)}+\phi_{2}^{(l-1)}|^{\alpha}
(\phi_{1}^{(l-1)}+\phi_{2}^{(l-1)}),\label{b1}\\
&x=0:~~~~\phi_{2}^{(l)}(t,0)=\left\{
\begin{aligned}
\phi_{2_{b}}(t),\quad t\geq0,\\
\phi_{2_{b'}}(t),\quad t<0,
\end{aligned}\right.\label{b2}\\
&x=L:~~~~\phi_{1}^{(l)}(t,L)=\left\{
\begin{aligned}
\phi_{1_{b}}(t),\quad t\geq0,\\
\phi_{1_{b'}}(t),\quad t<0,
\end{aligned}\right.\label{b3}
\end{align}
where $\phi_{i_{b'}}(t)(i=1,2)$ are obtained by periodic extension of $\phi_{i_{b}}(t)(i=1,2)$. Then we have
\begin{align*}
&\phi_{1}^{(l)}(t,L)=\phi_{1}^{(l)}(t+P,L),\\
&\phi_{2}^{(l)}(t,0)=\phi_{2}^{(l)}(t+P,0)
\end{align*}
for any fixed $t\in\mathbb{R}$. The linearized system~\eqref{b1}-\eqref{b3} is iterated from
\begin{align}
\phi^{(0)}(t,x)=(0,0).\label{b4}
\end{align}
\indent By means of the similar method in~\cite{Yu}, we can show~\theref{t1} from~\proref{p1} below.
\begin{proposition}\label{p1}
There is a small enough constant $\varepsilon_{1}>0$ and a large enough constant $C_{1}>0$, for any given $\varepsilon\in(0,\varepsilon_{1})$, if $\phi_{i_{b}}(t)(i=1,2)$ satisfy~\eqref{a9}-\eqref{a10}, then the sequence of $C^{1}$ solutions $\phi_{i}^{(l)}(t,x)(i=1,2)$ to system~\eqref{b1}-\eqref{b3} satisfy
\begin{align}
&\phi^{(l)}(t+P,x)=\phi^{(l)}(t,x),\quad\forall(t,x)\in D,\quad \forall l\in \mathbb{N_{+}},\label{b5}\\
&\|\phi^{(l)}\|_{C^{1}(D)}\leq C_{1}\varepsilon,\quad\forall l\in \mathbb{N_{+}},\label{b6}\\
&\|\phi^{(l)}-\phi^{(l-1)}\|_{C^{0}(D)}\leq C_{1}\varepsilon\kappa^{l},\quad \forall l\in \mathbb{N_{+}},\label{b7}\\
&\mathop{\max}\limits_{i=1,2}\{\varpi(\delta|\partial_{t}\phi_{i}^{(l)})+\varpi(\delta|\partial_{x}\phi_{i}^{(l)})\}\leq H_{P}(\delta),\quad \forall l\in \mathbb{N_{+}},\label{b8}
\end{align}
where $\kappa\in(0,1)$ ia a constant,
\begin{align}
\|\phi^{(l)}\|_{C^{1}(D)}&\mathop{=}\limits^{\bigtriangleup}\mathop{\max}\limits_{i=1,2}\{\|\phi_{i}^{(l)}\|_{C^{0}(D)},\|\partial_{t}\phi_{i}^{(l)}\|_{C^{0}(D)}
 ,\|\partial_{x}\phi_{i}^{(l)}\|_{C^{0}(D)}\},\notag\\
\varpi(\delta|h)&=\mathop{\sup}\limits_{\mathop{|t_{1}-t_{2}|\leq\delta}\limits_{|x_{1}-x_{2}|\leq\delta}}|h(t_{1},x_{1})-h(t_{2},x_{2})|,\notag
\end{align}
and $H_{P}(\delta)$ is a continuous function of $\delta\in(0,1)$ which is independent of $l$ and satisfies
$$
\mathop{\lim}\limits_{\delta\rightarrow0^{+}}H_{P}(\delta)=0.
$$
\end{proposition}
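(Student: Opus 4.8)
The plan is to prove all four estimates simultaneously by induction on $l$, exploiting the fact that once the coefficients $\lambda_i(\phi^{(l-1)}+\underline{\phi})$ and the source are frozen at the previous iterate, the linearized system \eqref{b1} decouples into two scalar transport equations solvable by the method of characteristics. By \eqref{a8} we have $\lambda_1<0<\lambda_2$ throughout $\Phi$, so the first characteristic family through any point $(t_0,x_0)$ traced backward in time meets the boundary $x=L$, while the second family meets $x=0$; by \eqref{D} each such backward characteristic reaches its boundary after a transit time no larger than $T_0=L\nu_{max}$. This is exactly the reduction to two initial value problems: $\phi_1^{(l)}$ is recovered from the data $\phi_{1_b}$ on $x=L$ and $\phi_2^{(l)}$ from $\phi_{2_b}$ on $x=0$, with no initial line needed because the periodic extension of the boundary data closes the time direction. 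The base case $\phi^{(0)}=(0,0)$ satisfies \eqref{b5}, \eqref{b6} and \eqref{b8} trivially (with vanishing derivatives), and I will assume \eqref{b5}--\eqref{b8} at level $l-1$ and propagate them to level $l$.

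Establishing \eqref{b5} is the easiest step: since $\beta$ is $P$-periodic by \eqref{F}, the boundary data is $P$-periodic, and $\phi^{(l-1)}$ is $P$-periodic by the inductive hypothesis, the whole level-$l$ problem is invariant under $t\mapsto t+P$, so $(t,x)\mapsto\phi^{(l)}(t+P,x)$ solves the same decoupled characteristic problem and uniqueness forces it to coincide with $\phi^{(l)}$. For the uniform $C^1$ bound \eqref{b6} I will integrate along characteristics: the value of $\phi_i^{(l)}$ at $(t_0,x_0)$ equals its boundary value plus the integral of the source over a time interval of length at most $T_0$, giving $\|\phi^{(l)}\|_{C^0}\le\varepsilon+CT_0(C_1\varepsilon)^{\alpha+1}$. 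To control the derivatives I differentiate \eqref{b1} in $t$ and in $x$; each derivative then satisfies a linear transport equation along the same characteristics whose zeroth-order coefficient is $O(C_1\varepsilon)$ and whose forcing is $O((C_1\varepsilon)^{\alpha+1})$, while the boundary values $\partial_x\phi_1^{(l)}(t,L)$ and $\partial_x\phi_2^{(l)}(t,0)$ are read off from the equation itself in terms of $\phi_{i_b}'$ and the source. Integrating these ODEs over the transit time with a Gronwall factor $e^{CC_1\varepsilon T_0}\approx 1$, all three quantities are bounded by $\varepsilon$ plus terms of order $\varepsilon^{1+\min(\alpha,1)}$; choosing $C_1$ large and then $\varepsilon_1$ small makes the right-hand side $\le C_1\varepsilon$, closing \eqref{b6}.

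The contraction estimate \eqref{b7} follows by subtracting the level-$l$ and level-$(l-1)$ equations. Writing $\psi^{(l)}=\phi^{(l)}-\phi^{(l-1)}$, the difference solves a transport equation along the level-$l$ characteristics whose forcing is bounded by $\big(C(C_1\varepsilon)^\alpha+CC_1\varepsilon\big)\|\psi^{(l-1)}\|_{C^0}$, the two terms coming respectively from the Lipschitz bound for $s\mapsto|s|^\alpha s$ and from the variation of the speed $\lambda_i$, each multiplied by the uniformly bounded derivatives of $\phi^{(l-1)}$. The decisive simplification is that all iterates share the same boundary data, so $\psi^{(l)}$ vanishes on the relevant boundary; integrating over the transit time $\le T_0$ then yields $\|\psi^{(l)}\|_{C^0}\le\kappa\|\psi^{(l-1)}\|_{C^0}$ with $\kappa=T_0\big(C(C_1\varepsilon)^\alpha+CC_1\varepsilon\big)<1$ for $\varepsilon_1$ small, and iterating from $\|\psi^{(1)}\|_{C^0}\le C_1\varepsilon$ gives the geometric decay \eqref{b7} (after a harmless adjustment of the constant).

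The real obstacle is the uniform modulus-of-continuity bound \eqref{b8}, which is what later upgrades the $C^0$-limit to a genuine $C^1$ solution via Arzel\`a--Ascoli. Here I will estimate $|\partial\phi_i^{(l)}(t_1,x_1)-\partial\phi_i^{(l)}(t_2,x_2)|$ for $|t_1-t_2|,|x_1-x_2|\le\delta$ by comparing the characteristic ODEs through the two nearby points. The clean inputs are that $\beta$ (by \eqref{F}--\eqref{F1}) and the boundary data (by \eqref{a9}) are $C^1$ and $P$-periodic, hence uniformly continuous with moduli independent of $l$, and that characteristics through nearby points stay $O(\delta)$-close over the bounded transit time. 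The difficulty is the self-referential structure: the modulus of the differentiated source involves $\varpi(\cdot\,|\partial\phi^{(l-1)})$, so one is led to a recursion $\Theta^{(l)}(\delta)\le \tilde A(\delta)+\theta\,\Theta^{(l-1)}(C\delta)$, where $\Theta^{(l)}$ denotes the left-hand side of \eqref{b8}, $\tilde A(\delta)\to0$ as $\delta\to0^+$, $C\ge1$ accounts for the spreading of characteristics, and $\theta=O\big((C_1\varepsilon)^{\min(\alpha,1)}\big)<1$. The plan is to unfold this recursion down to $\Theta^{(0)}\equiv0$ and to absorb the rescaling of $\delta$ by the geometric weight $\theta^k$, defining
\[
H_P(\delta)=\sum_{k\ge0}\theta^{k}\min\{\tilde A(C^{k}\delta),\,2C_1\varepsilon\},
\]
which is independent of $l$, continuous, and tends to $0$ as $\delta\to0^+$ because the tail is controlled by $\theta<1$ and the head by $\tilde A(C^k\delta)\to0$. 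Verifying that this series dominates $\Theta^{(l)}(\delta)$ for every $l$, and checking the continuity and vanishing of $H_P$, is the technical heart of the argument; the smallness of $\varepsilon$, which makes $\theta<1$, is exactly what closes the recursion.
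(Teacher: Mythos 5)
Your proposal follows the same overall route as the paper's proof: induction on $l$; decoupling of the frozen-coefficient system into two scalar problems integrated along characteristics emanating from the lateral boundaries, with the periodic extension of the boundary data replacing initial data; periodicity \eqref{b5} by uniqueness of the linear problem; $C^0$ and derivative bounds via Gronwall over the transit time $T_0$ (the paper differentiates only in $t$ and recovers $\partial_x\phi^{(l)}$ from the equation, but reading the lateral data for $\partial_x$ off the equation as you do is equivalent); and a contraction factor of order $\varepsilon^{\min(\alpha,1)}$ for \eqref{b7}, with the same index shift $\kappa^{l-1}\to\kappa^{l}$ paid for by choosing $\kappa>\varepsilon^{\min(\alpha,1)}$. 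The one point where you genuinely diverge is \eqref{b8}. The paper does not unfold the recursion into a series: it fixes $H_P(\delta)=20\max_i\varpi(\delta|\phi_{i_b}')+20\delta$ in advance and closes a one-step induction, showing that the slice-wise bound $\varpi(\delta|\partial_t\phi_i^{(l-1)}(\cdot,x))\le\frac18H_P(\delta)$ reproduces itself at level $l$ because the self-referential term carries an $O(\varepsilon^{\min(\alpha,1)})$ prefactor, while the spreading of nearby characteristics is only by the factor $e^{LC_1C_3\varepsilon}\le2$ and is absorbed by the midpoint trick $\varpi(2\delta|h)\le2\varpi(\delta|h)$ rather than by rescaling $\delta\mapsto C\delta$. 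Your series $\sum_k\theta^k\min\{\tilde A(C^k\delta),2C_1\varepsilon\}$ can be made to work, but it is the more delicate option: the correctly unfolded bound is $\sum_{k<K}\theta^k\tilde A(C^k\delta)+\theta^K\,O(C_1\varepsilon)$ for each truncation level $K$, and a termwise $\min$ inside the sum does not literally dominate this when $\tilde A(C^k\delta)>2C_1\varepsilon$, so the definition of $H_P$ needs a little extra care (e.g.\ an infimum over $K$). What your construction buys is robustness to a spreading constant $C$ genuinely larger than $2$; what the paper's buys is an explicit elementary $H_P$, a one-line check that $H_P(\delta)\to0$, and no need to track $C$ in the argument of the modulus. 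Both close the induction for the same reason, namely the smallness of $\theta=O(\varepsilon^{\min(\alpha,1)})$.
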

\begin{proof}
We establish the estimates~\eqref{b5}-\eqref{b8} inductively, i.e., for each $l\in\mathbb{N_{+}}$, we show
\begin{align}
\phi_{i}^{(l)}(t+P,x)=\phi_{i}^{(l)}(t,x),&\quad\forall(t,x)\in D,\forall i=1,2,\label{b9}\\
\mathop{\max}\limits_{{i=1,2}}\|\phi_{i}^{(l)}\|_{C^{1}(D)}&\leq C_{1}\varepsilon,\label{b10}\\
\mathop{\max}\limits_{{i=1,2}}\|\phi_{i}^{(l)}-\phi_{i}^{(l-1)}\|_{C^{0}(D)}&\leq C_{1}\varepsilon\kappa^{l},\label{b11}\\
\mathop{\max}\limits_{{i=1,2}}\varpi(\delta|\partial_{t}\phi_{i}^{(l)}(\cdot,x))&\leq\frac{1}{8}H_{P}(\delta),\quad\forall x\in[0,L]\label{b12}
\end{align}
and
\begin{align}
\mathop{\max}\limits_{{i=1,2}}\{\varpi(\delta|\partial_{t}\phi_{i}^{(l)})+\varpi(\delta|\partial_{x}\phi_{i}^{(l)})\}\leq H_{P}(\delta) \label{b13}
\end{align}
under the following hypothesis
\begin{align}
\phi_{i}^{(l-1)}(t+P,x)=\phi_{i}^{(l-1)}(t,x),&\quad\forall(t,x)\in D,\forall i=1,2,\label{b14}\\
\mathop{\max}\limits_{{i=1,2}}\|\phi_{i}^{(l-1)}\|_{C^{1}(D)}&\leq C_{1}\varepsilon,\label{b15}\\
\mathop{\max}\limits_{{i=1,2}}\|\phi_{i}^{(l-1)}-\phi_{i}^{(l-2)}\|_{C^{0}(D)}&\leq C_{1}\varepsilon\kappa^{l-1},\quad \forall l\geq2,\label{b16}\\
\mathop{\max}\limits_{{i=1,2}}\varpi(\delta|\partial_{t}\phi_{i}^{(l-1)}(\cdot,x))&\leq\frac{1}{8}H_{P}(\delta),\quad\forall x\in[0,L]\label{b17}
\end{align}
and
\begin{align}
\mathop{\max}\limits_{{i=1,2}}\{\varpi(\delta|\partial_{t}\phi_{i}^{(l-1)})+\varpi(\delta|\partial_{x}\phi_{i}^{(l-1)})\}\leq H_{P}(\delta). \label{b18}
\end{align}
Here the constant $\kappa$ should be determined later and
$$\varpi(\delta|h(\cdot,x))=\mathop{\max}\limits_{|t_{1}-t_{2}|\leq\delta}|h(t_{1},x)-h(t_{2},x)|.$$
\indent By~\eqref{b15} with small $\varepsilon>0$, we get that $\phi^{(l-1)}\in \Phi$, which gives the hypothesis~\eqref{a8} is true for~\eqref{b1}-\eqref{b3}. Multiplying $\nu_{i}(\phi^{(l-1)}+\underline{\phi})=\lambda_{i}^{-1}(\phi^{(l-1)}+\underline{\phi})$ on both sides of the $i$-th equation of~\eqref{b1} for $i=1,2$ and swapping the positions of $t$ and $x$, we have
\begin{align}
&\partial_{x}\phi_{1}^{(l)}+\nu_{1}(\phi^{(l-1)}+\underline{\phi})\partial_{t}\phi_{1}^{(l)}\nonumber\\
&=\frac{\beta(t,x)}{2}\nu_{1}(\phi^{(l-1)}+\underline{\phi})|\phi_{1}^{(l-1)}+\phi_{2}^{(l-1)}|^{\alpha}(\phi_{1}^{(l-1)}+\phi_{2}^{(l-1)}),\label{b19}\\
&x=L:~~~~\phi_{1}^{(l)}=\left\{
\begin{aligned}
\phi_{1_{b}}(t),\quad t\geq0,\\
\phi_{1_{b'}}(t),\quad t<0,
\end{aligned}\right.\label{b20}\\
\nonumber\\
&\partial_{x}\phi_{2}^{(l)}+\nu_{2}(\phi^{(l-1)}+\underline{\phi})\partial_{t}\phi_{2}^{(l)}\nonumber\\
&=\frac{\beta(t,x)}{2}\nu_{2}(\phi^{(l-1)}+\underline{\phi})|\phi_{1}^{(l-1)}+\phi_{2}^{(l-1)}|^{\alpha}(\phi_{1}^{(l-1)}+\phi_{2}^{(l-1)}),\label{b21}\\
&x=0:~~~~\phi_{2}^{(l)}=\left\{
\begin{aligned}
\phi_{2_{b}}(t),\quad t\geq0,\\
\phi_{2_{b'}}(t),\quad t<0.
\end{aligned}\right.\label{b22}
\end{align}
\indent Defining the characteristic curves $t=t_{i}^{(l)}(x;t_{0},x_{0})$ for $i=1,2$ and $l\in\mathbb{N_{+}}$ as follows
\begin{align}
\left\{
\begin{aligned}
&\frac{dt_{i}^{(l)}}{dx}(x;t_{0},x_{0})=\nu_{i}(\phi^{(l-1)}+\underline{\phi})(t_{i}^{(l)}(x;t_{0},x_{0}),x),\\
&t_{i}^{(l)}(x_{0};t_{0},x_{0})=t_{0}.
\end{aligned}\right.\label{b23}
\end{align}
\indent First, $\phi_{i}^{(0)}=0(i=1,2)$ satisfy~\eqref{b14}-\eqref{b15} and~\eqref{b17}-\eqref{b18}. Next, we prove estimates~\eqref{b9}-\eqref{b13} for $l\geq1$.\\
\indent By~\eqref{F} and \eqref{b14}, it is easy to check that if $\phi_{i}^{(l)}(t,x)(i=1,2)$ solves problem~\eqref{b19}-\eqref{b22}, so does $\phi_{i}^{(l)}(t+P,x)(i=1,2)$. Then \eqref{b9} is proved by the uniqueness of this linear system. We start to show the $C^{0}$ estimates for $\phi_{i}^{(l)}(i=1,2)$. To do this, we integrate~\eqref{b19} along the characteristic curve $t=t_{1}^{(l)}(x;t_{0},L)$ to obtain
\begin{align*}
&\phi_{1}^{(l)}(t_{1}^{(l)}(x;t_{0},L),x)=\phi_{1}^{(l)}(t_{0},L)\\
&\quad+\int_{L}^{x}\frac{\beta}{2}\nu_{1}(\phi^{(l-1)}+\underline{\phi})|\phi_{1}^{(l-1)}+\phi_{2}^{(l-1)}|^{\alpha}(\phi_{1}^{(l-1)}+\phi_{2}^{(l-1)})(t_{1}^{(l)}(y;t_{0},L),y)dy.
\end{align*}
By~\eqref{F1}, \eqref{D}, \eqref{a10} and~\eqref{b15}, we have
\begin{align}
\|\phi_{1}^{(l)}\|_{C^{0}(D)}\leq\varepsilon+L C_{0}C_{\alpha_{1}}(C_{1}\varepsilon)^{\alpha+1}\leq C_{1}\varepsilon,\label{b25}
\end{align}
where $C_{\alpha_{1}}>0$ is a constant only depending on $\alpha$.\\
\indent In a similar way, integrating~\eqref{b21} along $t=t_{2}^{(l)}(x;t_{0},0)$ to obtain
\begin{align}
\|\phi_{2}^{(l)}\|_{C^{0}(D)}\leq C_{1}\varepsilon.\label{b26}
\end{align}
From~\eqref{b25} and~\eqref{b26}, we obtain the $C^{0}$ norm estimates of $\phi_{i}^{(l)}$ in~\eqref{b10} is true.\\
\indent Denote $\varphi_{i}^{(l)}(i=1,2)$ as
\begin{align}
\varphi_{i}^{(l)}=\partial_{t}\phi_{i}^{(l)},\quad i=1,2,~~l\in\mathbb{N_{+}}.\label{b27}
\end{align}
Differentiating equations~\eqref{b19} and~\eqref{b21} with respect to $t$, we obtain
\begin{align}
&\partial_{x}\varphi_{1}^{(l)}+\nu_{1}(\phi^{(l-1)}+\underline{\phi})\partial_{t}\varphi_{1}^{(l)}\nonumber\\
&=-\Big(\nabla\nu_{1}(\phi^{(l-1)}+\underline{\phi})\cdot\partial_{t}\phi^{(l-1)}\Big)\varphi_{1}^{(l)}\nonumber\\
&\quad+\frac{\alpha+1}{2}\beta\nu_{1}(\phi^{(l-1)}+\underline{\phi})|\phi_{1}^{(l-1)}+\phi_{2}^{(l-1)}|^{\alpha}(\partial_{t}\phi_{1}^{(l-1)}+\partial_{t}\phi_{2}^{(l-1)})\nonumber\\
&\quad+\frac{\beta}{2}\Big(\nabla\nu_{1}(\phi^{(l-1)}+\underline{\phi})\cdot\partial_{t}\phi^{(l-1)}\Big)|\phi_{1}^{(l-1)}+\phi_{2}^{(l-1)}|^{\alpha}(\phi_{1}^{(l-1)}+\phi_{2}^{(l-1)})\nonumber\\
&\quad+\frac{\partial_{t}\beta}{2}\nu_{1}(\phi^{(l-1)}+\underline{\phi})|\phi_{1}^{(l-1)}+\phi_{2}^{(l-1)}|^{\alpha}(\phi_{1}^{(l-1)}+\phi_{2}^{(l-1)}),\label{b28}
\end{align}
\begin{align}
&\partial_{x}\varphi_{2}^{(l)}+\nu_{2}(\phi^{(l-1)}+\underline{\phi})\partial_{t}\varphi_{2}^{(l)}\nonumber\\
&=-\Big(\nabla\nu_{2}(\phi^{(l-1)}+\underline{\phi})\cdot\partial_{t}\phi^{(l-1)}\Big)\varphi_{2}^{(l)}\nonumber\\
&\quad+\frac{\alpha+1}{2}\beta\nu_{2}(\phi^{(l-1)}+\underline{\phi})|\phi_{1}^{(l-1)}+\phi_{2}^{(l-1)}|^{\alpha}(\partial_{t}\phi_{1}^{(l-1)}+\partial_{t}\phi_{2}^{(l-1)})\nonumber\\
&\quad+\frac{\beta}{2}\Big(\nabla\nu_{2}(\phi^{(l-1)}+\underline{\phi})\cdot\partial_{t}\phi^{(l-1)}\Big)|\phi_{1}^{(l-1)}+\phi_{2}^{(l-1)}|^{\alpha}(\phi_{1}^{(l-1)}+\phi_{2}^{(l-1)})\nonumber\\
&\quad+\frac{\partial_{t}\beta}{2}\nu_{2}(\phi^{(l-1)}+\underline{\phi})|\phi_{1}^{(l-1)}+\phi_{2}^{(l-1)}|^{\alpha}(\phi_{1}^{(l-1)}+\phi_{2}^{(l-1)}).\label{b29}
\end{align}
Integrating~\eqref{b28} along the $1$-characteristic curve $t=t_{1}^{(l)}(x;t_{0},L)$, we have
\begin{align*}
&\varphi_{1}^{(l)}(t_{1}^{(l)}(x;t_{0},L),x)\\
&=\varphi_{1}^{(l)}(t_{0},L)-\int_{L}^{x}\Big(\nabla\nu_{1}(\phi^{(l-1)}+\underline{\phi})\cdot\partial_{t}\phi^{(l-1)}\Big)\varphi_{1}^{(l)}(t_{1}^{(l)}(y;t_{0},L),y)dy\\
&\quad+\int_{L}^{x}\Big(\frac{\alpha+1}{2}\beta\nu_{1}(\phi^{(l-1)}+\underline{\phi})|\phi_{1}^{(l-1)}+\phi_{2}^{(l-1)}|^{\alpha}(\partial_{t}\phi_{1}^{(l-1)}+\partial_{t}\phi_{2}^{(l-1)})\\
&\quad+\frac{\beta}{2}\Big(\nabla\nu_{1}(\phi^{(l-1)}+\underline{\phi})\cdot\partial_{t}\phi^{(l-1)}\Big)|\phi_{1}^{(l-1)}+\phi_{2}^{(l-1)}|^{\alpha}(\phi_{1}^{(l-1)}+\phi_{2}^{(l-1)})\\
&\quad+\frac{\partial_{t}\beta}{2}\nu_{1}(\phi^{(l-1)}+\underline{\phi})|\phi_{1}^{(l-1)}+\phi_{2}^{(l-1)}|^{\alpha}(\phi_{1}^{(l-1)}+\phi_{2}^{(l-1)})\Big)(t_{1}^{(l)}(y;t_{0},L),y)dy.
\end{align*}
By $\nu_{i}(\phi+\underline{\phi})=\lambda_{i}^{-1}(\phi+\underline{\phi})$ and~\eqref{D}, we have
\begin{align}
\mathop{\sup}\limits_{\phi\in \Phi}|\nabla\nu_{i}(\phi+\underline{\phi})|\leq\nu_{max}^{2}\mathop{\sup}\limits_{\phi\in \Phi}|\nabla\lambda_{i}(\phi+\underline{\phi})|\leq\mathop{\sup}\limits_{\phi\in \Phi}|\nabla\lambda_{i}(\phi+\underline{\phi})|
\leq C_{3},\label{b30}
\end{align}
where constant $C_{3}>0$ is independent of $l$.\\
\indent By~\eqref{F1}, \eqref{D}, \eqref{a10}, \eqref{b15} and~\eqref{b30}, we obtain
\begin{align}
\|\varphi_{1}^{(l)}\|_{C^{0}(D)}\leq&(\varepsilon+2LC_{0}C_{\alpha_{2}}(C_{1}\varepsilon)^{\alpha+1}+L\alpha C_{0}C_{\alpha_{2}}(C_{1}\varepsilon)^{\alpha+1}\nonumber\\
&+LC_{0}C_{3}C_{\alpha_{1}}(C_{1}\varepsilon)^{\alpha+2}
+LC_{0}C_{\alpha_{1}}(C_{1}\varepsilon)^{\alpha+1})e^{LC_{1}C_{3}\varepsilon}\nonumber\\
\leq&\hbar C_{1}\varepsilon,\label{b31}
\end{align}
where $C_{\alpha_{2}}>0$ is a constant only depending on $\alpha$ and constant $0<\hbar<1$ is independent of $l$.\\
\indent In a similar way, we integrate~\eqref{b29} along the $2$-characteristic curve $t=t_{2}^{(l)}(x;t_{0},0)$ to obtain
\begin{align}
\|\varphi_{2}^{(l)}\|_{C^{0}(D)}\leq \hbar C_{1}\varepsilon.\label{b32}
\end{align}
By applying the equations~\eqref{b19} and~\eqref{b21} and noting~\eqref{F1}, \eqref{D}, \eqref{b15} and~\eqref{b31}-\eqref{b32}, we gain
\begin{align}
\|\partial_{x}\phi_{i}^{(l)}\|_{C^{0}(D)}\leq\hbar C_{1}\varepsilon+C_{0}C_{\alpha_{1}}(C_{1}\varepsilon)^{\alpha+1}\leq C_{1}\varepsilon.\label{b33}
\end{align}
By~\eqref{b4}, we can get~\eqref{b11} for $l=1$ directly from~\eqref{b25}-\eqref{b26}. We begin to prove~\eqref{b11} for $l\geq2$. It follows from~\eqref{b19} that
\begin{align}
&\Big(\partial_{x}+\nu_{1}(\phi^{(l-1)}+\underline{\phi})\partial_{t}\Big)(\phi_{1}^{(l)}-\phi_{1}^{(l-1)})\nonumber\\
&=-\Big(\nu_{1}(\phi^{(l-1)}+\underline{\phi})-\nu_{1}(\phi^{(l-2)}+\underline{\phi})\Big)\partial_{t}\phi_{1}^{(l-1)}+\frac{\beta}{2}\nu_{1}(\phi^{(l-1)}+\underline{\phi})\nonumber\\
&\quad\times\Big[|\phi_{1}^{(l-1)}+\phi_{2}^{(l-1)}|^{\alpha}(\phi_{1}^{(l-1)}+\phi_{2}^{(l-1)})
-|\phi_{1}^{(l-2)}+\phi_{2}^{(l-2)}|^{\alpha}(\phi_{1}^{(l-2)}+\phi_{2}^{(l-2)})\Big]\nonumber\\
&\quad+\frac{\beta}{2}\Big(\nu_{1}(\phi^{(l-1)}+\underline{\phi})-\nu_{1}(\phi^{(l-2)}+\underline{\phi})\Big)|\phi_{1}^{(l-2)}+\phi_{2}^{(l-2)}|^{\alpha}(\phi_{1}^{(l-2)}+\phi_{2}^{(l-2)}).\label{b34}
\end{align}
Integrating~\eqref{b34} along the $1-$characteristic curve $t=t_{1}^{(l)}(x;t_{0},L)$, we have
\begin{align*}
&|\phi_{1}^{(l)}(t_{1}^{(l)}(x;t_{0},L),x)-\phi_{1}^{(l-1)}(t_{1}^{(l)}(x;t_{0},L),x)|\\
\leq&|\phi_{1}^{(l)}(t_{0},L)-\phi_{1}^{(l-1)}(t_{0},L)|\\
&+L\mathop{\sup}\limits_{\phi\in \Phi}|\nabla\nu_{1}(\phi+\underline{\phi})|\|\phi^{(l-1)}-\phi^{(l-2)}\|_{C^{0}(D)}\|\partial_{t}\phi^{(l-1)}\|_{C^{0}(D)}\\
&+L\mathop{\sup}|\beta|\nu_{\max}(\alpha+1)|\phi_{1}^{(l-1)}+\phi_{2}^{(l-1)}|^{\alpha}\|\phi^{(l-1)}-\phi^{(l-2)}\|_{C^{0}(D)}\\
&+\frac{1}{2}L|\mathop{\sup}\beta|\mathop{\sup}\limits_{\phi\in \Phi}|\nabla\nu_{1}(\phi+\underline{\phi})|\|\phi^{(l-1)}-\phi^{(l-2)}\|_{C^{0}(D)}|\phi_{1}^{(l-2)}+\phi_{2}^{(l-2)}|^{\alpha+1}.
\end{align*}
By~\eqref{F1}, \eqref{D}, \eqref{b15}-\eqref{b16}, \eqref{b20} and~\eqref{b30}, it follows that
\begin{align}
\|\phi_{1}^{(l)}-\phi_{1}^{(l-1)}\|_{C^{0}(D)}\leq& LC_{3}(C_{1}\varepsilon)^{2}\kappa^{l-1}+2L(\alpha+1)C_{0}C_{\alpha_{2}}(C_{1}\varepsilon)^{\alpha+1}\kappa^{l-1}\nonumber\\
&+LC_{0}C_{3}C_{\alpha_{1}}(C_{1}\varepsilon)^{\alpha+2}\kappa^{l-1}\nonumber\\
\leq& C_{1}\varepsilon^{1+\eta}\kappa^{l-1},\label{b35}
\end{align}
where $\eta=\min\{1,\alpha\}$. In a similar way, we can also get
\begin{align}
\|\phi_{2}^{(l)}-\phi_{2}^{(l-1)}\|_{C^{0}(D)}\leq C_{1}\varepsilon^{1+\eta}\kappa^{l-1}.\label{b36}
\end{align}
Thus, by choosing $\kappa\in(0,1)$ satisfying $\varepsilon^{\eta}<\kappa$, we obtain~\eqref{b11} from~\eqref{b35}-\eqref{b36}.\\
\indent We will prove~\eqref{b12} for $l\geq1$. Let
$$
H_{P}(\delta)=20\mathop{\max}\limits_{i=1,2}\varpi(\delta|\phi'_{i_{b}})+20\delta
$$
for some constant $\delta\in(0,1)$. Because of $\phi'_{i_{b}}\in C^{0}(\mathbb{R}_{+})$, we have $\mathop{\lim}\limits_{\delta\rightarrow0^{+}}H_{P}(\delta)=0$.\\
\indent We derive from the definition~\eqref{b23} that for any two points $(t_{1},x_{0})$ and $(t_{2},x_{0})$ in the domain $D$ with $|t_{1}-t_{2}|\leq\delta$ and $x_{0}\in[0,L]$,
\begin{align*}
&|t_{1}^{(l)}(x;t_{1},x_{0})-t_{1}^{(l)}(x;t_{2},x_{0})|\\
&\leq|t_{1}-t_{2}|+|\int_{x_{0}}^{x}\nu_{1}(\phi^{(l-1)}+\underline{\phi})(t_{1}^{(l)}(y;t_{1},x_{0}),y)\\
&\hspace{2.2cm}-\nu_{1}(\phi^{(l-1)}+\underline{\phi})(t_{1}^{(l)}(y;t_{2},x_{0}),y)dy|\\
&\leq|t_{1}-t_{2}|+|\int_{x_{0}}^{x}|t_{1}^{(l)}(y;t_{1},x_{0})-t_{1}^{(l)}(y;t_{2},x_{0})|\\
&\hspace{2.2cm}|\int_{0}^{1}\Big(\nabla\nu_{1}(\phi^{(l-1)}+\underline{\phi})\cdot\partial_{t}
\phi^{(l-1)}\Big)\\
&\hspace{2.2cm}\Big(\gamma t_{1}^{(l)}(y;t_{1},x_{0})+(1-\gamma)t_{1}^{(l)}(y;t_{2},x_{0}),y\Big)d\gamma dy|.
\end{align*}
By Gronwall's inequality, \eqref{b15} and~\eqref{b30}, it follows that
\begin{align}
|t_{1}^{(l)}(x;t_{1},x_{0})-t_{1}^{(l)}(x;t_{2},x_{0})|\leq&|t_{1}-t_{2}|e^{LC_{1}C_{3}\varepsilon}\nonumber\\
\leq&\delta(1+LC_{1}C_{3}\varepsilon),\quad\forall x\in[0,L].\label{b37}
\end{align}
Integrating~\eqref{b28} along $t=t_{1}^{(l)}(x;t_{1},x_{0})$ and $t=t_{1}^{(l)}(x;t_{2},x_{0})$ respectively and then subtracting the two resulted expressions, we get
\begin{align*}
&\varphi_{1}^{(l)}(t_{2},x_{0})-\varphi_{1}^{(l)}(t_{1},x_{0})\\
&=\varphi_{1}^{(l)}(t_{1}^{(l)}(L;t_{2},x_{0}),L)-\varphi_{1}^{(l)}(t_{1}^{(l)}(L;t_{1},x_{0}),L)\\
&\quad+\int_{L}^{x_{0}}-\Big(\nabla\nu_{1}(\phi^{(l-1)}+\underline{\phi})\cdot\partial_{t}\phi^{(l-1)}\Big)\varphi_{1}^{(l)}|_{(t_{1}^{(l)}(x;t_{1},x_{0}),x)}
^{(t_{1}^{(l)}(x;t_{2},x_{0}),x)}dx\\
&\quad+\int_{L}^{x_{0}}\Big(\frac{\alpha+1}{2}\beta\nu_{1}(\phi^{(l-1)}+\underline{\phi})|\phi_{1}^{(l-1)}+\phi_{2}^{(l-1)}|^{\alpha}(\partial_{t}\phi_{1}^{(l-1)}+\partial_{t}\phi_{2}^{(l-1)})\\
&\quad+\frac{\beta}{2}\Big(\nabla\nu_{1}(\phi^{(l-1)}+\underline{\phi})\cdot\partial_{t}\phi^{(l-1)}\Big)|\phi_{1}^{(l-1)}+\phi_{2}^{(l-1)}|^{\alpha}(\phi_{1}^{(l-1)}+\phi_{2}^{(l-1)})\\
&\quad+\frac{\partial_{t}\beta}{2}\nu_{1}(\phi^{(l-1)}+\underline{\phi})|\phi_{1}^{(l-1)}+\phi_{2}^{(l-1)}|^{\alpha}(\phi_{1}^{(l-1)}+\phi_{2}^{(l-1)})\Big)
|_{(t_{1}^{(l)}(x;t_{1},x_{0}),x)}^{(t_{1}^{(l)}(x;t_{2},x_{0}),x)}dx.
\end{align*}
By the Gronwall's inequality, \eqref{F1}, \eqref{D}, \eqref{b15}, \eqref{b30} and~\eqref{b37}, it follows that
\begin{align}
\varpi(\delta|\varphi_{1}^{(l)}(\cdot,x))\leq \frac{1}{8}H_{P}(\delta),\quad\forall x\in[0,L].\label{b38}
\end{align}
In a similar way, we have
\begin{align}
\varpi(\delta|\varphi_{2}^{(l)}(\cdot,x))\leq \frac{1}{8}H_{P}(\delta),\quad\forall x\in[0,L].\label{b39}
\end{align}
From~\eqref{b38}-\eqref{b39}, we obtain~\eqref{b12}.\\
\indent Finally, we prove~\eqref{b13}. We first consider the special case that two given points $(t_{1},x_{1})$ and $(t_{2},x_{2})$ with $|t_{1}-t_{2}|\leq\delta,|x_{1}-x_{2}|\leq\delta$ located on the same characteristic curve $t=t_{1}^{(l)}(x;t_{0},x_{0})$, namely, $t_{2}=t_{1}^{(l)}(x_{2};t_{1},x_{1})$. \\
\indent Integrating~\eqref{b28} along the characteristic curve $t=t_{1}^{(l)}(x;t_{1},x_{1})$, we have
\begin{align*}
&\varphi_{1}^{(l)}(t_{2},x_{2})-\varphi_{1}^{(l)}(t_{1},x_{1})\\
&=\int_{x_{1}}^{x_{2}}\Big(-\Big(\nabla\nu_{1}(\phi^{(l-1)}+\underline{\phi})\cdot\partial_{t}\phi^{(l-1)}\Big)\varphi_{1}^{(l)}\\
&\quad+\frac{\alpha+1}{2}\beta\nu_{1}(\phi^{(l-1)}+\underline{\phi})|\phi_{1}^{(l-1)}+\phi_{2}^{(l-1)}|^{\alpha}(\partial_{t}\phi_{1}^{(l-1)}+\partial_{t}\phi_{2}^{(l-1)})\\
&\quad+\frac{\beta}{2}\Big(\nabla\nu_{1}(\phi^{(l-1)}+\underline{\phi})\cdot\partial_{t}\phi^{(l-1)}\Big)|\phi_{1}^{(l-1)}+\phi_{2}^{(l-1)}|^{\alpha}(\phi_{1}^{(l-1)}+\phi_{2}^{(l-1)})\\
&\quad+\frac{\partial_{t}\beta}{2}\nu_{1}(\phi^{(l-1)}+\underline{\phi})|\phi_{1}^{(l-1)}+\phi_{2}^{(l-1)}|^{\alpha}(\phi_{1}^{(l-1)}+\phi_{2}^{(l-1)})\Big)(t_{1}^{(l)}(x;t_{1},x_{1}),x)dx.
\end{align*}
By~\eqref{F1}, \eqref{D}, \eqref{b15} and~\eqref{b30}, we have
\begin{align}
|\varphi_{1}^{(l)}(t_{2},x_{2})-\varphi_{1}^{(l)}(t_{1},x_{1})|\leq\frac{1}{12}H_{P}(\delta).\label{b40}
\end{align}
For two given points $(t_{1},x_{1})$ and $(t_{2},x_{2})$ with $|t_{1}-t_{2}|\leq\delta,|x_{1}-x_{2}|\leq\delta$ located on characteristic curves $t=t_{1}^{(l)}(x;t_{1},x_{1})$ and $t=t_{2}^{(l)}(x;t_{2},x_{2})$ respectively, we select a point $(t_{3},x_{2})$ on the $1$-th characteristic curve passing through $(t_{1},x_{1})$, namely,
$$
t_{3}=t_{1}^{(l)}(x_{2};t_{1},x_{1}).
$$
By~\eqref{D} and definition~\eqref{b23}, one has
$$
|t_{3}-t_{1}|\leq|\nu_{1}||x_{2}-x_{1}|\leq|x_{2}-x_{1}|\leq\delta,
$$
thus
$$
|t_{3}-t_{2}|\leq|t_{3}-t_{1}|+|t_{2}-t_{1}|\leq2\delta.
$$
By means of the estimates~\eqref{b38} and~\eqref{b40}, we have
\begin{equation}\label{b400}
\begin{aligned}
&|\varphi_{1}^{(l)}(t_{2},x_{2})-\varphi_{1}^{(l)}(t_{1},x_{1})|\\
&\leq|\varphi_{1}^{(l)}(t_{2},x_{2})-\varphi_{1}^{(l)}(\frac{t_{2}+t_{3}}{2},x_{2})|+|\varphi_{1}^{(l)}(\frac{t_{2}+t_{3}}{2},x_{2})-\varphi_{1}^{(l)}(t_{3},x_{2})|\\
&\quad+|\varphi_{1}^{(l)}(t_{3},x_{2})-\varphi_{1}^{(l)}(t_{1},x_{1})|\\
&\leq\frac{1}{4}H_{P}(\delta)+\frac{1}{12}H_{P}(\delta)\\
&=\frac{1}{3}H_{P}(\delta).
\end{aligned}
\end{equation}
The combination of~\eqref{b400} and~\eqref{b40} leads to
\begin{align}
\varpi(\delta|\varphi_{1}^{(l)})\leq\frac{1}{3}H_{P}(\delta).\label{b41}
\end{align}
In a similar way, we obtain
\begin{align}
\varpi(\delta|\varphi_{2}^{(l)})\leq\frac{1}{3}H_{P}(\delta).\label{b42}
\end{align}
By the aid of~\eqref{b19} and~\eqref{b21}, \eqref{F1}, \eqref{D}, \eqref{b15}, \eqref{b30} and~\eqref{b41}-\eqref{b42}, we have
\begin{align}
\varpi(\delta|\partial_{x}\phi_{i}^{(l)})\leq\frac{1}{2}H_{P}(\delta),\quad i=1,2. \label{b43}
\end{align}
Hence, \eqref{b13} is proved from~\eqref{b41}\eqref{b43}. The proof of~\proref{p1} is completed.
\end{proof}
\indent Under the help of~\proref{p1} and the similar arguments as in \cite{Qu}, the proof of Theorem 2.1 could be presented, here we omit the details.
\section{Stability of the Time-periodic Solution}\label{s4}
\indent\indent In this section, we give the proof of~\theref{t2} to consider the stability of the time-periodic solution obtained in Theorem 2.1. For the sake of proving the existence of the classical solutions $\phi=\phi(t,x)$, we only need to prove the following ~\lemref{l1} on the basis of the existence and uniqueness of local $C^{1}$ solution for the mixed initial-boundary value problem for quasilinear hyperbolic system(cf. Chapter $4$ in \cite{Yu}).
Inspired by the method in~\cite{Li}, we give the proof of~\lemref{l1}.
\begin{lemma}\label{l1}
 There exists a small constant $\varepsilon_{0}>0$, for any given $\varepsilon\in(0,\varepsilon_{0})$, there exists $\sigma=\sigma(\varepsilon)>0$ such that if
 \begin{align}
 &\|\phi_{i_{b}}\|_{C^{1}(\mathbb{R}_{+})}\leq\sigma,\quad i=1,2,\label{c111}\\
 &\|\phi_{0}\|_{C^{1}[0,L]}\leq\sigma,\label{c22}
 \end{align}
 then the $C^{1}$ solution $\phi=\phi(t,x)$ to the initial-boundary value problem~\eqref{a4}-\eqref{a7} satisfies
\begin{align}
\|\phi\|_{C^{1}(D)}\leq \varepsilon. \label{c2}
\end{align}
\end{lemma}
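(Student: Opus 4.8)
The plan is to reduce both the global existence and the bound \eqref{c2} to a uniform-in-time a priori $C^1$ estimate, obtained by a continuity (bootstrap) argument along characteristics, in the same spirit as \proref{p1} but now for the genuine initial-boundary value problem \eqref{a4}-\eqref{a7} rather than for a linearized iterate. By the local existence and uniqueness theory for quasilinear hyperbolic systems (cf. \cite{Yu}), a $C^1$ solution exists on a maximal strip $[0,T^{*})\times[0,L]$, and since the $C^1$ norm is the only quantity that can force $T^{*}<\infty$, it suffices to show that $\|\phi\|_{C^1}\le\varepsilon$ propagates with a strict margin. Concretely, I would let $\tau\le T^{*}$ be the supremum of times for which $\|\phi\|_{C^1([0,\tau]\times[0,L])}\le\varepsilon$, assume this bound, and improve it to $\le\varepsilon/2$; openness and closedness in $[0,T^{*})$ then give $\tau=T^{*}$, and the uniform bound forces $T^{*}=\infty$. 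Note that, unlike in \proref{p1}, no modulus-of-continuity (equicontinuity) estimate is needed here, since we bound one given solution rather than extract a convergent subsequence of iterates.

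First I would establish the $C^0$ estimate. Under the a priori smallness the subsonic sign condition \eqref{a8} holds, so $\nu_1=\lambda_1^{-1}<0<\lambda_2^{-1}=\nu_2$ and, by \eqref{D}, $\nu_{max}\le1$. The decisive structural fact is that every backward $i$-characteristic issued from an arbitrary $(t_*,x_*)\in D$ exits, within a spatial length at most $L$, either through the incoming boundary ($x=L$ for $i=1$, $x=0$ for $i=2$, where $\phi_1(\cdot,L)=\phi_{1_b}$ and $\phi_2(\cdot,0)=\phi_{2_b}$ are prescribed) or through the initial line $t=0$; in both cases the data is controlled by $\sigma$. Writing the system as $\partial_x\phi_i+\nu_i\partial_t\phi_i=\nu_i S$ with $S=\tfrac{\beta}{2}|\phi_1+\phi_2|^{\alpha}(\phi_1+\phi_2)$ and integrating along the characteristic, the source contributes only a term of size $LC_0 C_\alpha\|\phi\|_{C^0}^{\alpha+1}$ by \eqref{F1} ($C_\alpha$ depending only on $\alpha$), so $\|\phi\|_{C^0}\le C\sigma+LC_0C_\alpha\|\phi\|_{C^0}^{\alpha+1}$; the superlinearity $\alpha+1>1$ then yields $\|\phi\|_{C^0}\le C\sigma$ without any smallness assumption on $\beta$.

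Next I would differentiate the equations in $t$, set $w_i=\partial_t\phi_i$, and integrate the transport equations $\partial_x w_i+\nu_i\partial_t w_i=-(\nabla\nu_i\cdot\partial_t\phi)w_i+\partial_t(\nu_i S)$ along the same characteristics. The coefficient of $w_i$ is bounded by $C_3\|\partial_t\phi\|_{C^0}\le C_3\varepsilon$ over spatial length $\le L$, so Gronwall produces only a harmless factor $e^{C_3 L\varepsilon}$, while the inhomogeneity $\partial_t(\nu_i S)$ is of order $\|\phi\|_{C^0}^{\alpha}\|\partial_t\phi\|_{C^0}$ and $\|\partial_t\beta\|_{C^0}\|\phi\|_{C^0}^{\alpha+1}$, hence higher order in $\varepsilon$. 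The data for $w_i$ on the incoming boundary is $\phi'_{i_b}$, controlled by $\sigma$ via \eqref{c111}; on $t=0$ one uses $w_i(0,x)=S(0,x)-\lambda_i\phi'_{i_0}(x)$ together with the compatibility conditions \eqref{E}, which is again $O(\sigma)$. This yields $\|\partial_t\phi_i\|_{C^0}\le (C\sigma+C\varepsilon^{\alpha+1})e^{C_3 L\varepsilon}$, and $\partial_x\phi_i$ is then recovered algebraically from $\partial_x\phi_i=\nu_i S-\nu_i\partial_t\phi_i$, which is of the same order. Choosing $\varepsilon_0$ small enough to absorb the $O(\varepsilon^{\alpha+1})$ terms into $\varepsilon/4$, and then $\sigma=\sigma(\varepsilon)$ small enough that the $O(\sigma)$ terms fall below $\varepsilon/4$, upgrades the bound to $\|\phi\|_{C^1}\le\varepsilon/2$ and closes the bootstrap. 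The main obstacle is precisely the uniform-in-time character of the estimate: it rests entirely on the subsonic condition \eqref{a8}, which guarantees that the domain of dependence of every point is a characteristic segment of spatial length at most $L$, so that neither the source integrals nor the Gronwall factors accumulate as $t\to\infty$; the remaining delicate point is the careful bookkeeping of the corner compatibility \eqref{E} for the $t$-derivatives on $t=0$.
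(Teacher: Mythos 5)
Your proposal is correct and follows essentially the same route as the paper: an a priori bootstrap on $\|\phi\|_{C^1}$ closed by integrating along characteristics, using the subsonic condition \eqref{a8} so that every backward characteristic exits through the prescribed data (incoming boundary or $t=0$) within bounded length, whence the nonlinear source contributes only $O(\varepsilon^{\alpha})$ Gronwall coefficients and $O(\varepsilon^{\alpha+1})$ inhomogeneities that are absorbed for small $\varepsilon_0$. The only differences are bookkeeping ones: you take $x$ as the evolution variable and estimate $\partial_t\phi$ first (recovering $\partial_x\phi$ algebraically), while the paper advances in time in steps of length $T_1$, estimates $w=\partial_x\phi$ first with the boundary value $\partial_x\phi_1(t,L)$ derived from the equation, and then obtains $\partial_t\phi$ similarly.
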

\begin{proof}
We make a-priori assumption on the $C^{1}$ solution $\phi=\phi(t,x)$ as follows
\begin{align}
\|\phi\|_{C^{1}(D)}\leq \varepsilon_{0}\label{G}
\end{align}
for some constant $\varepsilon_{0}>0$.\\
\indent By~\eqref{a8}, \eqref{G} and choosing $\varepsilon_{0}$ small enough, we can have on domain $D$
\begin{align}
\lambda_{1}(t,x)<0<\lambda_{2}(t,x).\label{c3}
\end{align}
Let
\begin{align}
T_{0}&=\mathop{\max}\limits_{1\leq i\leq2}\mathop{\sup}\limits_{\phi\in \Phi}\frac{L}{|\lambda_{i}(\phi+\underline{\phi})|}=L\nu_{max},\label{a16}\\
\lambda_{max}&=\mathop{\max}\limits_{\mathop{(t,x)\in D}\limits_{i=1,2}}|\lambda_{i}(t,x)|,\quad
T_{1}=\frac{L}{\lambda_{max}}.\notag
\end{align}
\indent By continuity and~\eqref{a5}, \eqref{c22}, there exists a small constant $\sigma>0$ such that~\eqref{c2} holds on a domain $D(\eta_{0})=\{(t,x)|0\leq t\leq\eta_{0},0\leq x\leq L\}$ for a suitably small constant $\eta_{0}>0$. Therefore, for the sake of proving~\eqref{c2}, we only need to show that there exists a small constant $\varepsilon_{0}>0$ such that if~\eqref{c2} holds on the domain $D(T)=\{(t,x)|0\leq t\leq T, 0\leq x\leq L\}$ for any fixed $T>0$, then it still hold on the domain $\{(t,x)|T\leq t\leq T+T_{1}, 0\leq x\leq L\}$ for any $T_{1}>0$.\\
\indent Let
\begin{align*}
w(t,x)&=(w_{1}(t,x),w_{2}(t,x))^{\top},\\
w_{i}(t,x)&=\partial_{x}\phi_{i}(t,x),\quad i=1,2.
\end{align*}
Differentiating~\eqref{a4} with respect to $x$, we get
\begin{align}
&\partial_{t}w_{i}+\lambda_{i}(\phi+\underline{\phi})\partial_{x}w_{i}\nonumber\\
&=-(\nabla\lambda_{i}(\phi+\underline{\phi})\cdot w)w_{i}+\frac{\alpha+1}{2}\beta|\phi_{1}+\phi_{2}|^{\alpha}(w_{1}+w_{2})\nonumber\\
&\quad+\frac{\partial_{x}\beta}{2}
|\phi_{1}+\phi_{2}|^{\alpha}(\phi_{1}+\phi_{2})\label{k1}
\end{align}
with $i=1,2$.\\
\indent Define
\begin{align}
&\phi(t)=\mathop{\max}\limits_{\mathop{i=1,2}\limits_{0\leq x\leq L}}|\phi_{i}(t,x)|,\label{c4}\\
&w(t)=\mathop{\max}\limits_{\mathop{i=1,2}\limits_{0\leq x\leq L}}|w_{i}(t,x)|.\label{c4l}
\end{align}
For the $C^{1}$ solution exists on $D(T+T_{1})=\{(t,x)|0\leq t\leq T+T_{1}, 0\leq x\leq L\}$, we define the $i$-th characteristic curve $g=g_{i}(\tau;t,x)$ passing through a point $(t,x)\in D(T+T_{1})$ with $t\in[T,T+T_{1}]$ by the following form
\begin{align*}
\left\{\begin{aligned}
&\frac{dg_{i}(\tau;t,x)}{d\tau}=\lambda_{i}(\tau,g_{i}(\tau;t,x)),\\
&\tau=t:g_{i}(t;t,x)=x.
\end{aligned}\right.
\end{align*}
Each of the characteristic curves $g_{i}(\tau;t,x), i=1,2$ has two possibilities.  The $1$-th characteristic curve is described as an example.\\
\textbf{Case 1:} The $1$-th characteristic curve $g=g_{1}(\tau;t,x)$ intersects the interval $[0,L]$ on the $x$-axis with an intersection point $(0,g_{1}(0;t,x))$, see Figure $1$.
\begin{figure}[H]
\centering
\includegraphics[width=9cm]{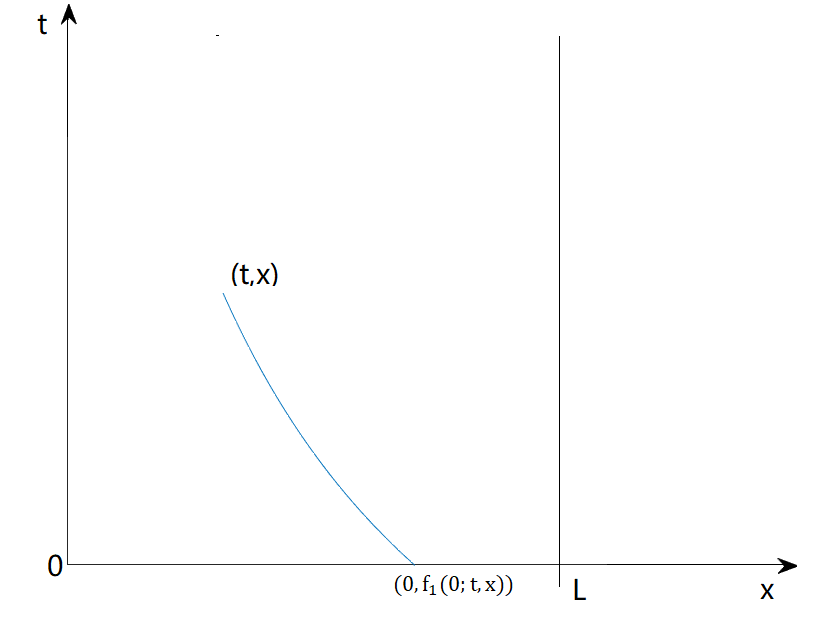}
\begin{center}
Figure 1\quad Image of the $1$-th characteristic curve intersecting  the $x$-axis.
\end{center}
\end{figure}
\noindent\textbf{Case 2:} The $1$-th characteristic curve $g=g_{1}(\tau;t,x)$ intersects the boundary $x=L$ with an intersection point $(\tau_{1}(t,x),L)$, where $\tau_{1}(t,x)$ satisfies
$$
g_{1}(\tau_{1}(t,x);t,x)=L,
$$
see Figure $2$.\\
\indent Clearly, one has
$$
T_{0}\geq t-\tau_{1}(t,x)\geq0.
$$
\begin{figure}[H]
\centering
\includegraphics[width=9cm]{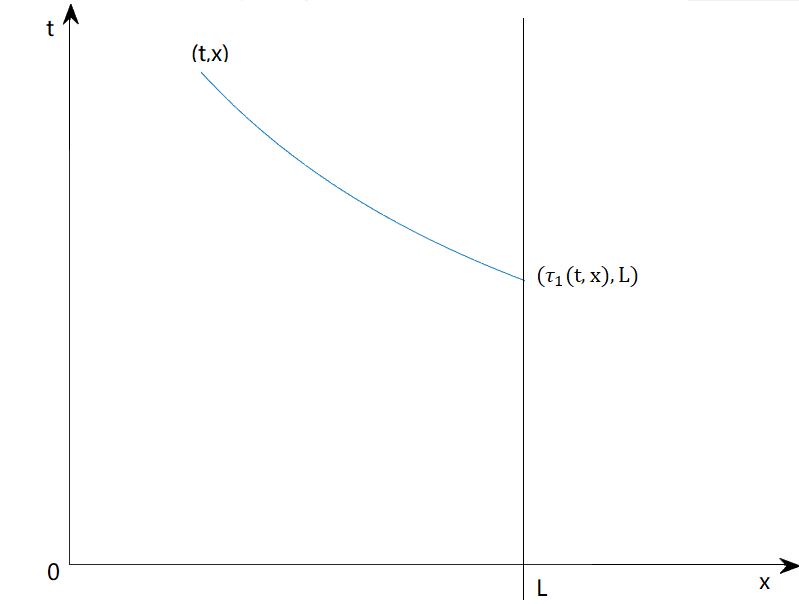}
\begin{flushleft}
Figure 2\quad Image of the $1$-th characteristic curve intersecting the boundary $x=L$.
\end{flushleft}
\end{figure}
We first prove the $C^{0}$ estimates of ~\eqref{c2}. For Case 1, integrating the $1$-th equation in~\eqref{a4} along the $1$-th characteristic $g=g_{1}(\tau;t,x)$, we obtain
$$
\phi_{1}(t,x)=\phi_{1_{0}}(x)+\int_{0}^{t}\frac{\beta}{2}|\phi_{1}+\phi_{2}|^{\alpha}(\phi_{1}+\phi_{2})(\tau,g_{1}(\tau;t,x))d\tau.
$$
By~\eqref{F1}, \eqref{c22}, \eqref{G} and~\eqref{c4}, we get
\begin{align}
|\phi_{1}(t,x)|\leq \sigma+2C_{0}C_{\alpha_{2}}\varepsilon_{0}^{\alpha}\int_{0}^{t}\phi(\tau)d\tau.\label{c5}
\end{align}
Since $t\leq T_{0}$ in this case, one has $T\leq T_{0}$. Noting that~\eqref{c2} holds on $D(T)$, we get
\begin{align}
|\phi_{1}(t,x)|\leq \sigma+2T_{0}C_{0}C_{\alpha_{2}}\varepsilon_{0}^{\alpha}\varepsilon+2C_{0}C_{\alpha_{2}}\varepsilon_{0}^{\alpha}\int_{T}^{t}\phi(\tau)d\tau.\label{c6}
\end{align}
\indent For Case $2$, integrating the $1$-th equation in~\eqref{a4} along the $1$-th characteristic $g=g_{1}(\tau;t,x)$ and using~\eqref{a7}, we have
\begin{align}
\phi_{1}(t,x)=\phi_{1_{b}}(\tau_{1}(t,x))+\int_{\tau_{1}(t,x)}^{t}\frac{\beta}{2}|\phi_{1}+\phi_{2}|^{\alpha}(\phi_{1}+\phi_{2})(\tau,g_{1}(\tau;t,x))d\tau.\label{c7}
\end{align}
By~\eqref{F1}, \eqref{c111}, \eqref{G} and~\eqref{c4}, it follows from~\eqref{c7} that
\begin{align}
|\phi_{1}(t,x)|\leq \sigma+2C_{0}C_{\alpha_{2}}\varepsilon_{0}^{\alpha}\int_{\tau_{1}(t,x)}^{t}\phi(\tau)d\tau.\label{cc7}
\end{align}
No matter wether $\tau_{1}(t,x)>T$ or $\tau_{1}(t,x)\leq T$, similar to~\eqref{c6}, we have
\begin{align}
|\phi_{1}(t,x)|\leq \sigma+2T_{0}C_{0}C_{\alpha_{2}}\varepsilon_{0}^{\alpha}\varepsilon+ 2C_{0}C_{\alpha_{2}}\varepsilon_{0}^{\alpha}\int_{T}^{t}\phi(\tau)d\tau.\label{c8}
\end{align}
\indent By~\eqref{c6}and~\eqref{c8}, we obtain for any fixed point $(t,x)\in D(T+T_{1})$ with $t\in[T,T+T_{1}]$
$$
|\phi_{1}(t,x)|\leq \sigma+2T_{0}C_{0}C_{\alpha_{2}}\varepsilon_{0}^{\alpha}\varepsilon+ 2C_{0}C_{\alpha_{2}}\varepsilon_{0}^{\alpha}\int_{T}^{t}\phi(\tau)d\tau.
$$
By the arguments as above, the similar estimates of $\phi_{2}(t,x)$ can also be derived. \\
\indent Thus, we have
$$
\phi(t)\leq \sigma+2T_{0}C_{0}C_{\alpha_{2}}\varepsilon_{0}^{\alpha}\varepsilon+ 2C_{0}C_{\alpha_{2}}\varepsilon_{0}^{\alpha}\int_{T}^{t}\phi(\tau)d\tau,\quad\forall t\in[T,T+T_{1}].
$$
By Gronwall's inequality, we get
\begin{align}
\phi(t)\leq(\sigma+2T_{0}C_{0}C_{\alpha_{2}}\varepsilon_{0}^{\alpha}\varepsilon)e^{2C_{0}C_{\alpha_{2}}\varepsilon_{0}^{\alpha}T_{1}}\leq \varepsilon,\quad\forall t\in[T,T+T_{1}].\label{c9l}
\end{align}
Therefore, we get the $C^{0}$ estimates of~\eqref{c2} by choosing suitably small positive constants $\varepsilon_{0}$ and $\sigma$.\\
\indent Next, we show the estimates~\eqref{c2} for the spatial derivative of $\phi$. For Case 1, we integrate the $1$-th equation in~\eqref{k1} to get
\begin{align*}
w_{1}(t,x)=&w_{1_{0}}(x)+\int_{0}^{t}\big(-(\nabla\lambda_{1}(\phi+\underline{\phi})\cdot w)w_{1}\\
&+\frac{\alpha+1}{2}\beta|\phi_{1}+\phi_{2}|^{\alpha}(w_{1}+w_{2})\\
&+\frac{\partial_{x}\beta}{2}|\phi_{1}+\phi_{2}|^{\alpha}(\phi_{1}+\phi_{2})\big)(\tau,g_{1}(\tau;t,x))d\tau.
\end{align*}
By~\eqref{F1}, \eqref{b30}, \eqref{c22}, \eqref{G}, \eqref{c4l} and~\eqref{c9l}, we have
\begin{align}
|w_{1}(t,x)|\leq \sigma+C_{0}C_{\alpha_{1}}\varepsilon^{\alpha+1}(T+T_{1})+(C_{3}\varepsilon_{0}+ C_{0}C_{\alpha_{2}}\varepsilon^{\alpha})\int_{0}^{t}w(\tau)d\tau.\label{c10l}
\end{align}
Since $t\leq T_{0}$ in this case, one has $T\leq T_{0}$. Noting that~\eqref{c2} holds on $D(T)$, we get
\begin{align}
|w_{1}(t,x)|\leq& \sigma+C_{0}C_{\alpha_{1}}\varepsilon^{\alpha+1}(T+T_{1})+
T_{0}C_{3}\varepsilon_{0}\varepsilon+T_{0} C_{0}C_{\alpha_{2}}\varepsilon^{\alpha+1}\nonumber\\
&+(C_{3}\varepsilon_{0}+C_{0}C_{\alpha_{2}}\varepsilon^{\alpha})\int_{T}^{t}w(\tau)d\tau.\label{c11l}
\end{align}
\indent For Case $2$, we need to give the boundary condition $\partial_{x}\phi_{1}(t,L)$. By~$\eqref{a4}_{1}$, we have
\begin{align*}
\partial_{x}\phi_{1}(t,L)=-\frac{1}{\lambda_{1}(\phi+\underline{\phi})(t,L)}\phi'_{1b}(t)+\frac{\beta(t,L)}{2\lambda_{1}(\phi+\underline{\phi})(t,L)}
\big(|\phi_{1}+\phi_{2}|^{\alpha}(\phi_{1}+\phi_{2})\big)(t,L)
\end{align*}
which implies from~\eqref{F1}, \eqref{D}, \eqref{c111} and~\eqref{c9l}
\begin{align}
|\partial_{x}\phi_{1}(t,L)|\leq C_{4}(\sigma+\varepsilon^{\alpha+1}),\quad \forall t\in(0,T+T_{1}]. \label{HB1}
\end{align}
Integrating the $1$-th equation in~\eqref{k1} along the $1$-th characteristic $g=g_{1}(\tau;t,x)$, we have
\begin{align}
w_{1}(t,x)=&w_{1}(\tau_{1}(t,x),L)+\int_{\tau_{1}(t,x)}^{t}\big(-(\nabla\lambda_{1}(\phi+\underline{\phi})\cdot w)w_{1}\nonumber\\
&+\frac{\alpha+1}{2}\beta|\phi_{1}+\phi_{2}|^{\alpha}(w_{1}+w_{2})\nonumber\\
&+\frac{\partial_{x}\beta}{2}|\phi_{1}+\phi_{2}|^{\alpha}(\phi_{1}+\phi_{2})\big)(\tau,g_{1}(\tau;t,x))d\tau.\label{c12l}
\end{align}
By~\eqref{F1}, \eqref{b30}, \eqref{G}, \eqref{c4l}, \eqref{c9l} and~\eqref{HB1}, it follows from~\eqref{c12l} that
\begin{align*}
|w_{1}(t,x)|\leq&C_{4}(\sigma+\varepsilon^{\alpha+1})+ C_{0}C_{\alpha_{1}}\varepsilon^{\alpha+1}(T+T_{1})\notag\\
&+(C_{3}\varepsilon_{0}+ C_{0}C_{\alpha_{2}}\varepsilon^{\alpha})\int_{\tau_{1}(t,x)}^{t}w(\tau)d\tau.
\end{align*}
No matter wether $\tau_{1}(t,x)>T$ or $\tau_{1}(t,x)\leq T$, similar to~\eqref{c11l}, we have
\begin{align}
|w_{1}(t,x)|\leq& C_{4}(\sigma+\varepsilon^{\alpha+1})+C_{0}C_{\alpha_{1}}\varepsilon^{\alpha+1}(T+T_{1})+T_{0}C_{3}\varepsilon_{0}\varepsilon+T_{0} C_{0}C_{\alpha_{2}}\varepsilon^{\alpha+1}\nonumber\\
&+(C_{3}\varepsilon_{0}+C_{0}C_{\alpha_{2}}\varepsilon^{\alpha})\int_{T}^{t}w(\tau)d\tau.\label{c13l}
\end{align}
\indent By~\eqref{c11l}and~\eqref{c13l}, we have
\begin{align*}
|w_{1}(t,x)|\leq& C_{4}(\sigma+\varepsilon^{\alpha+1})+C_{0}C_{\alpha_{1}}\varepsilon^{\alpha+1}(T+T_{1})+T_{0}C_{3}\varepsilon_{0}\varepsilon+T_{0} C_{0}C_{\alpha_{2}}\varepsilon^{\alpha+1}\\
&+(C_{3}\varepsilon_{0}+ C_{0}C_{\alpha_{2}}\varepsilon^{\alpha})\int_{T}^{t}w(\tau)d\tau.
\end{align*}
By the arguments as above, the similar estimates for $w_{2}(t,x)$ can also be obtained. \\
\indent Thus, we have
\begin{align*}
w(t)\leq& C_{4}(\sigma+\varepsilon^{\alpha+1})+C_{0}C_{\alpha_{1}}\varepsilon^{\alpha+1}(T+T_{1})+T_{0}C_{3}\varepsilon_{0}\varepsilon+T_{0} C_{0}C_{\alpha_{2}}\varepsilon^{\alpha+1}\\
&+ (C_{3}\varepsilon_{0}+ C_{0}C_{\alpha_{2}}\varepsilon^{\alpha})\int_{T}^{t}w(\tau)d\tau,\quad \forall t\in[T,T+T_{1}].
\end{align*}
Then by Gronwall's inequality, we have
\begin{align}
w(t)\leq& (C_{4}\sigma+C_{4}\varepsilon^{\alpha+1}+C_{0}C_{\alpha_{1}}\varepsilon^{\alpha+1}(T+T_{1})+T_{0}C_{3}\varepsilon_{0}\varepsilon+T_{0} C_{0}C_{\alpha_{2}}\varepsilon^{\alpha+1})\nonumber\\
&\times e^{(C_{3}\varepsilon_{0}+ C_{0}C_{\alpha_{2}}\varepsilon^{\alpha})T_{1}}\nonumber\\
\leq& \varepsilon,\quad \forall t\in[T,T+T_{1}],\label{c14l}
\end{align}
where we have chosen suitably small positive constants $\varepsilon_{0}$ and $\sigma$ for the above last inequality. Moreover, by the above similar steps, we can also get
\begin{align}
\|\partial_{t}\phi\|_{C^{0}}\leq \varepsilon.\label{c15l}
\end{align}
Combining~\eqref{c9l}, \eqref{c14l} and~\eqref{c15l}, we complete the proof of~\eqref{c2}. Meanwhile, this also indicates that the previous hypothesis~\eqref{G} is rational.
\end{proof}
\indent Under the help of the local existence and uniqueness of the classical solution stated in~\cite{Yu} and the continuity argument, we obtain from~\lemref{l1} the global existence and uniqueness of the classical solutions $\phi=\phi(t,x)$ to the initial-boundary value problem~\eqref{a4}-\eqref{a7} with
\begin{align}
\|\phi\|_{C^{1}(D)}\leq C_{1}\varepsilon.\label{c9}
\end{align}
Next, we prove~\eqref{a15} inductively. Suppose that for some $t_{*}>0$ and $N\in\mathbb{N}$, we have
\begin{align}
\mathop{\max}\limits_{i=1,2}\|\phi_{i}(t,\cdot)-\phi_{i}^{(P)}(t,\cdot)\|_{C^{0}}\leq C_{2}\varepsilon\xi^{N},\quad\forall t\in[t_{*},t_{*}+T_{0}], \label{Indu1}
\end{align}
we will prove
\begin{align}
\mathop{\max}\limits_{i=1,2}\|\phi_{i}(t,\cdot)-\phi_{i}^{(P)}(t,\cdot)\|_{C^{0}}\leq C_{2}\varepsilon\xi^{N+1},\quad\forall t\in[t_{*}+T_{0},t_{*}+2T_{0}],\label{Indu2}
\end{align}
where $\xi\in(0,1)$ is a constant to be determined later and $\phi_{i}^{(P)}(t,x), i=1,2$ is the time-periodic solution proved in~\theref{t1}. Let
$$
\theta(t)=\mathop{\max}\limits_{1\leq i\leq2}\mathop{\sup}\limits_{x\in[0,L]}|\phi_{i}(t,x)-\phi_{i}^{(P)}(t,x)|.
$$
From~\eqref{Indu1}, it follows that $\theta(t)$ is continuous and
$$
\theta(t_{*}+T_{0})\leq C_{2}\varepsilon\xi^{N}.
$$
It's just necessary to prove
\begin{align}
\theta(t)\leq C_{2}\varepsilon\xi^{N+1},\quad\forall t\in[t_{*}+T_{0},\tau]\label{c10}
\end{align}
under the hypothesis
\begin{align}
\theta(t)\leq C_{2}\varepsilon\xi^{N},\quad\forall t\in[t_{*},\tau]\label{c11}
\end{align}
for any $\tau\in[t_{*}+T_{0},t_{*}+2T_{0}]$.\\
\indent We have from~\eqref{a4}
\begin{align}
\Big(\partial_{t}+\lambda_{i}(\phi+\underline{\phi})\partial_{x}\Big)\phi_{i}&=\frac{\beta(t,x)}{2}|\phi_{1}+\phi_{2}|^{\alpha}(\phi_{1}+\phi_{2}),\quad i=1,2,\label{c12}\\
\Big(\partial_{t}+\lambda_{i}(\phi^{(P)}+\underline{\phi})\partial_{x}\Big)\phi_{i}^{(P)}&=\frac{\beta(t,x)}{2}|\phi_{1}^{(P)}+\phi_{2}^{(P)}|^{\alpha}(\phi_{1}^{(P)}+\phi_{2}^{(P)}),\quad i=1,2.\label{c13}
\end{align}
Since $\phi$ and $\phi_{i}^{(P)}$ satisfy the same boundary condition, \eqref{c10} holds obviously at the boundary $x=L$ and $x=0$.\\
\indent By~\eqref{c12}-\eqref{c13}, we get
\begin{align*}
&\Big(\partial_{t}+\lambda_{i}(\phi+\underline{\phi})\partial_{x}\Big)(\phi_{i}-\phi_{i}^{(P)})\\
=&\Big(\lambda_{i}(\phi^{(P)}+\underline{\phi})-\lambda_{i}(\phi+\underline{\phi})\Big)\partial_{x}\phi_{i}^{(P)}+\frac{\beta}{2}\Big(|\phi_{1}+\phi_{2}|^{\alpha}(\phi_{1}+\phi_{2})\\
&-|\phi_{1}^{(P)}+\phi_{2}^{(P)}|^{\alpha}(\phi_{1}^{(P)}+\phi_{2}^{(P)})\Big),\quad i=1,2.
\end{align*}
By~\eqref{a16}, the backward characteristic curve $x=g_{i}(t;\hat{t},\hat{x})$ passing through any point $(\hat{t},\hat{x})\in[t_{*}+T_{0},\tau]\times[0,L]$ intersects the boundary $x=0$ or $x=L$ at $t\in[t_{*},\tau]$. Integrating the above equation along the $i$-th characteristic curve $x=g_{i}(t;\hat{t},\hat{x})~(i=1,2)$, by~\eqref{F1}, \eqref{a13}, \eqref{b30} and~\eqref{c11}, we have
\begin{align*}
\theta(\hat{t})\leq&2T_{0}\mathop{\sup}\limits_{\phi\in \Phi}|\nabla\lambda_{i}(\phi+\underline{\phi})|\|\phi^{(P)}-\phi\|_{C^{0}}\|\partial_{x}\phi_{i}^{(P)}\|_{C^{0}}\\
&+2T_{0}|\beta|(\alpha+1)|\phi_{1}+\phi_{2}|^{\alpha}\|\phi_{i}-\phi_{i}^{(P)}\|_{C^{0}}\\
\leq&2T_{0}C_{1}C_{3}\varepsilon C_{2}\varepsilon\xi^{N}+4T_{0}(\alpha+1)C_{0}C_{\alpha_{2}}(C_{1}\varepsilon)^{\alpha}C_{2}\varepsilon\xi^{N}.
\end{align*}
\indent We select small $\varepsilon_{2}>0$ and some constant $\xi\in(0,1)$ such that
$$
2T_{0}C_{1}C_{3}\varepsilon+4T_{0}(\alpha+1)C_{0}C_{\alpha_{2}}(C_{1}\varepsilon)^{\alpha}\leq\xi,
$$
for any $\varepsilon\in(0,\varepsilon_{2})$, which gives us
$$
\theta(\hat{t})\leq C_{2}\varepsilon\xi^{N+1}.
$$
Since $\hat{t}$ is arbitrary, we obtain~\eqref{c10}. Thus we complete the proof of~\theref{t2}.
\section{Acknowledge}\label{s5}
\indent\indent The authors would like to give many thanks to Professor Hairong Yuan for his encouragement and discussion.~This work is supported in part by the National Natural Science Foundation of China (Grant No. 11671237).

\begin{align*}
E-mail~address:&~~Xiaomin~zhang: zxm15924687@163.com\\
&~~Jiawei~Sun: sunjiawei0122@163.com\\
&~~Huimin~Yu: hmyu@sdnu.edu.cn
\end{align*}

\end{sloppypar}
\end{document}